\theoremstyle{plain}
\newtheorem{theorem*}{Theorem}
\newtheorem*{lemma*}{Lemma}
\newtheorem{corollary*}{Corollary}
\newtheorem*{proposition*}{Proposition}
\newtheorem{conjecture*}{Conjecture}
\newtheorem{theorem}{Theorem}[section]
\newtheorem{lemma}[theorem]{Lemma}
\newtheorem{corollary}[theorem]{Corollary}
\newtheorem{proposition}[theorem]{Proposition}
\newcounter{lem}
\newtheorem{alphalemma}[lem]{Lemma}
\theoremstyle{remark}
\newtheorem*{remark}{Remark}
\newtheorem*{definition}{Definition}
\newtheorem*{claim}{Claim}
\def\LAT{\tau^{(2)}}
\def\op{\operatorname}
  \def\Z{\Bbb{Z}} \def\R{\Bbb{R}} 
 \def\genus{\op{genus}}   
   \def\bp{\begin{pmatrix}}
	 \def\ep{\end{pmatrix}} \def\bn{\begin{enumerate}} 
	   \def\en{\end{enumerate}}
\def\ba{\begin{array}} \def\ea{\end{array}}  
\def\id{\op{id}}  
\def\hom{\op{Hom}} 
\def\dim{\operatorname{dim}}
\begin{document}
	\author{Gerrit Herrmann}
	\address {Fakult\"at f\"ur Mathematik \\
		Universit\"at Regensburg\\
		Germany\\
		\newline\url{www.gerrit-herrmann.de} }
	\email{gerrit.herrmann@mathematik.uni-regensburg.de}
	
	\title{The $L^2$-Alexander torsion for Seifert fiber spaces}
	
	\begin{abstract}
%For a irreducible Seifert fiber space $M\neq S^1\times D^2$ and cohomology class $\phi\in H^1(M)$ the $L^2$-Alexander torsion $\LAT(M,\phi,\gamma)$ can be represented by $\max \left\{1,t^{x_M(\phi)}\right\}$ provided $(M,\phi,\gamma\colon \pi_1(N)\rightarrow G)$ is admissible and $\gamma$ restricted to a regular Seifert fiber has infinite order in $G$. Here $x_M(\phi)$ denotes the Thurston norm of $\phi$. 
We calculate the $L^2$-Alexander torsion for Seifert fiber spaces and graph manifolds in terms of the Thurston norm.
	\end{abstract}
	\maketitle
	\section{Introduction and main result}
	We say two functions $f,g\colon \R_{>0}\rightarrow \R_{\geq 0}$ are equivalent and write $f\doteq g$, if there exists an $r\in \R$ such that $f(t)=t^{r} g(t)$ for all $t\in\R_{>0}$.
In \cite{DFL14a} Dubois, Friedl and L\"uck associated to an admissible triple $(M,\phi,\gamma)$ a equivalence class of a function $\LAT (M,\phi,\gamma)\colon \R_{> 0} \rightarrow \R_{\geq 0}$ called $L^2$-Alexander torsion. The triple consists of a manifold $M$, a cohomology class $\phi \in H^1(M;\R)$ and a group homomorphism $\gamma\colon \pi_1(M)\rightarrow G$ such that $\phi$ factors through $\gamma$.

The $L^2$-Alexander torsion is a generalization of the $L^2$-Alexander polynomial introduced by Li-Zhang \cite{LZ06a}, \cite{LZ06b}, \cite{LZ08} and has been studied recently by many authors Dubois-Wegner \cite{DW10}, \cite{DW13}, Ben Aribi \cite{BA13}, \cite{BA16}
Dubois-Friedl-L\"uck \cite{DFL14a}, \cite{DFL14b}, \cite{DFL15}, Friedl-L\"uck \cite{FL15} and Liu \cite{Li15}.

 Given a 2-dimensional manifold $S$ with connected components $S_1\cup\ldots\cup S_k$ we define its complexity to be $\chi_-(S):=\sum_{i=1}^{k}\max \left\{-\chi(S_i),0\right\} $. Let $M$ be 3-manifold (throughout this paper every 3-manifold is understood to be orientable, compact and have empty or toroidal boundary). Thurston has shown in \cite{Th86} that the function  
\begin{align*}
x_M\colon H^1(M;\Z)&\longrightarrow \Z\\
\phi&\longmapsto \min\left\{\chi_-(S)\ \left|\ \begin{array}{ll}
[S] \text{ is Poincar\'{e} dual to $\phi$} \\ 
\text{ and properly embedded}
\end{array}\right.\right\}. 
\end{align*}
extends to a semi-norm on $H^1(M;\R)$.

 In this paper we will show, that for a Seifert fiber space $M$ the function $\LAT (M,\phi,\gamma)$ is completely determined by $x_M(\phi)$. To be more precise:
%	\begin{definition}[Complexity of a surface]
%		Let $S$ be a orientable surface and $C_i$ its connected components. We define
%		\[\chi^{-}(S):=\sum_{C_i\neq S^2,D^2}-\chi (C_i) \]
%		as the complexity of $S$.
%	\end{definition}
%	\begin{definition}[Thurston norm]
%		Let $M$ be a 3-manifold. We define
%		\[x_M(\phi):=\min \left\{\chi^{-}(S_\phi)\ |\  S_\phi\text{ is dual to } \phi\right\}.\]
%	\end{definition}
%	\begin{theorem}
%		The map\begin{align*}
%			x_M\colon H^1(M;\Z)&\longrightarrow \N \\
%			\phi&\longmapsto x_M(\phi)
%		\end{align*}
%		extends to a seminorm on $H^1(M;\R)$.
%	\end{theorem}
%	
%	

	\begin{theorem}[Main Theorem]\label{maintheroem}
		Let $M$ be a Seifert fiber space with $M\neq S^1\times S^2,S^1\times D^2$ and $(M,\phi,\gamma)$ an admissible triple, such that the image of a regular fiber under $\gamma$ is an element with infinite order.
		Then the  $L^2$-Alexander torsion is given by
		\[ \tau^{(2)}(M,\phi,\gamma) \doteq \max \left\{1, t^{x_M(\phi)}\right\}.\]
	\end{theorem}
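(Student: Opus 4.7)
The plan is to split the argument according to whether $\phi$ vanishes on a regular fiber $f$, reducing first to the case of an honest $S^1$-bundle and then using known computations.

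\emph{Inputs from the literature.} I would use: (i) Thurston's calculation giving, for a Seifert fiber space, $x_M(\phi)=|\phi(f)|\cdot\max\{0,-\chi^{\mathrm{orb}}(B)\}$ when $\phi(f)\neq 0$, and $x_M(\phi)=0$ otherwise; (ii) the behaviour of $\tau^{(2)}$ under finite regular covers, proved in \cite{DFL14a}, which matches the corresponding scaling of $x_M$; (iii) the computation of $\tau^{(2)}$ for fibered 3-manifolds from \cite{FL15}, giving $\tau^{(2)}(M,\phi,\gamma)\doteq\max\{1,t^{-\chi(F)}\}$ when $\phi$ is the class of a fibration $M\to S^1$ with fiber $F$.

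\textbf{Step 1 (reduction to an $S^1$-bundle).} Since $M\neq S^1\times S^2,S^1\times D^2$, there is a finite regular cover $\widetilde M\to M$ in which every exceptional fiber becomes regular; equivalently $\widetilde M$ is an honest $S^1$-bundle over an orientable surface $\Sigma$. The regular fiber of $\widetilde M$ is a positive power of $f$, so the hypothesis that its image under $\gamma$ has infinite order is preserved. By (i) and (ii) both sides of the claimed identity scale by the covering degree, so it suffices to prove the theorem when $M$ itself is an $S^1$-bundle.

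\textbf{Step 2 (the case $\phi(f)=0$).} Here $\phi$ factors through $\pi_1(\Sigma)$ and is Poincar\'e dual to a disjoint union of vertical tori, so $x_M(\phi)=0$. Using the central extension $1\to\langle f\rangle\to\pi_1(M)\to\pi_1(\Sigma)\to 1$, a direct Fuglede-Kadison computation reduces the twisted $L^2$-chain complex of $M$ to the chain complex of $\Sigma$ tensored with the $S^1$-chain complex weighted by $t^{\phi(f)}=1$; the operator along the fiber is then right multiplication by $\gamma(f)-1$, whose Fuglede-Kadison determinant equals $1$ because $\gamma(f)$ has infinite order. The total torsion therefore collapses to $1$, matching $\max\{1,t^0\}=1$.

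\textbf{Step 3 (the case $\phi(f)\neq 0$).} A rational rescaling of $\phi$ is integral and primitive, and after a further finite cover I may assume $\phi$ is dual to a fibration $M\to S^1$ with horizontal fiber $F$ meeting each $S^1$-fiber in $|\phi(f)|$ points. Then $F\to\Sigma$ is a $|\phi(f)|$-fold cover and $-\chi(F)=|\phi(f)|\cdot|\chi(\Sigma)|=x_M(\phi)$ by (i). Applying (iii) gives $\tau^{(2)}(M,\phi,\gamma)\doteq\max\{1,t^{-\chi(F)}\}=\max\{1,t^{x_M(\phi)}\}$.

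The main obstacle is Step 2: one must carry out the Fuglede-Kadison computation on the central extension cleanly enough to separate the $S^1$-direction and to show that the non-fiber edges, which are weighted non-trivially by $\phi$, ultimately cancel. A secondary difficulty is the covering argument of Step 1, since $\gamma$ need not factor through the covering group and must be replaced by its restriction to the cover; one needs the precise multiplicativity statement for $\tau^{(2)}$ under finite covers with non-trivially twisted coefficients.
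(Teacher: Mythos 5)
Your overall strategy (reduce to an honest $S^1$-bundle by a finite cover, then split on whether $\phi$ vanishes on the fiber) is genuinely different from the paper's, but it has two gaps, one of which I think is fatal as stated. The serious problem is input (iii) in Step 3: it is \emph{not} true that $\tau^{(2)}(M,\phi,\gamma)\doteq\max\{1,t^{-\chi(F)}\}$ for a general fibered class. What \cite{DFL14a} and \cite{FL15} prove is that the $L^2$-Alexander torsion of a fibered class agrees with $\max\{1,t^{x_M(\phi)}\}$ only for $t$ sufficiently large and sufficiently small (i.e.\ they control the degree/asymptotics); at $t=1$ the function equals $\exp(-\rho^{(2)})$, which for a hyperbolic fibered manifold is $\exp(\mathrm{vol}/6\pi)\neq 1$, so the claimed identity fails on a whole interval of $t$ in general. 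Showing that no such ``middle'' contribution occurs for Seifert manifolds is exactly the content of the theorem, so Step 3 as written is circular. (A side remark: for a nontrivial $S^1$-bundle the fiber class is torsion in $H_1$, so $\phi(f)\neq 0$ only ever occurs for $M=S^1\times\Sigma$; there one could compute the fibered class by hand using the periodicity of the monodromy, but that is a new computation, not a citation.)

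The second gap is the covering step, which you partly flag yourself: the multiplicativity of $\tau^{(2)}$ under finite covers is only established in \cite{DFL14a} for covers of the form $\gamma^{-1}(G')$ with $G'\leq G$ of finite index, whereas the cover that turns a Seifert fibration into an $S^1$-bundle has no reason to be induced from $G$ via $\gamma$. Your Step 2, by contrast, is essentially sound --- it is the free-action special case of the paper's Lemma~\ref{l2torofs1cwc} --- but it must be carried out as an induction over the cells of the base using the product and gluing formulas (Propositions~\ref{torsionofpair} and~\ref{pro:LATgluing}), not as a single Fuglede--Kadison identity. The paper sidesteps both gaps: it never passes to a cover on the torsion side, instead computing $\tau^{(2)}$ directly for $S^1$-CW-complexes (exceptional fibers contribute through their finite isotropy groups, giving the orbifold Euler characteristic), uses the finite cover and Gabai's $x_{\widehat M}(p^*\phi)=d\cdot x_M(\phi)$ only for the purely topological Lemma~\ref{thurstonnormforallSFS}, and handles Seifert spaces with non-orientable base (which admit no $S^1$-action) by cutting along vertical tori and invoking the Eisenbud--Neumann additivity of the Thurston norm together with the gluing formula for $\tau^{(2)}$. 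To salvage your outline you would need to replace (iii) by a direct computation and either prove the covering multiplicativity you need or, as in the paper, avoid it.
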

Theorem \ref{maintheroem} was already used in \cite{DFL14a} to show the following two corollaries. 
\begin{corollary}\label{cor:graph}
	Let $(M,\phi,\gamma)$ be an admissible triple with $M\neq S^1\times S^2,S^1\times D^2$. Suppose that $M$ is a graph manifold and that given any $JSJ$-component of $M$ the image of a regular fiber under $\gamma$ is an element of infinite order, then
	\[\LAT ( M,\phi,\gamma)=\max\left\{1,t^{x_M(\phi)}\right\}. \]
\end{corollary}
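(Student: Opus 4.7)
The plan is to reduce the graph-manifold case to the Seifert case of Theorem \ref{maintheroem} via the JSJ decomposition. First I would write $M = M_1 \cup_{T_1}\cdots \cup_{T_k} M_n$ as its JSJ decomposition, where each $M_i$ is a Seifert fiber space and each $T_j$ is an incompressible torus. Since $(M,\phi,\gamma)$ is admissible, the restricted triples $(M_i,\phi|_{M_i},\gamma|_{\pi_1(M_i)})$ and $(T_j,\phi|_{T_j},\gamma|_{\pi_1(T_j)})$ should also be admissible, and by hypothesis each $M_i$ satisfies the infinite-order condition on its regular fiber, so Theorem \ref{maintheroem} applies to each piece.

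The key tool would be a gluing (Mayer--Vietoris type) formula for the $L^2$-Alexander torsion from \cite{DFL14a}: for a decomposition of $M$ along incompressible tori,
\[ \LAT(M,\phi,\gamma)\doteq \frac{\prod_{i=1}^{n}\LAT(M_i,\phi|_{M_i},\gamma|_{\pi_1(M_i)})}{\prod_{j=1}^{k}\LAT(T_j,\phi|_{T_j},\gamma|_{\pi_1(T_j)})}. \]
Since $\pi_1(T_j)\cong \Z^2$ is amenable and its image under $\gamma$ contains the regular fiber of an adjacent Seifert piece (hence has infinite order), a standard vanishing argument should yield $\LAT(T_j,\phi|_{T_j},\gamma|_{\pi_1(T_j)})\doteq 1$. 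Applying Theorem \ref{maintheroem} to each JSJ piece then gives $\LAT(M_i,\phi|_{M_i},\gamma|_{\pi_1(M_i)})\doteq \max\{1,t^{x_{M_i}(\phi|_{M_i})}\}$.

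To finish, I would invoke the classical additivity of the Thurston norm under the JSJ decomposition of graph manifolds (Eisenbud--Neumann), namely $x_M(\phi)=\sum_i x_{M_i}(\phi|_{M_i})$, together with the elementary identity $\prod_i \max\{1,t^{a_i}\}=\max\{1,t^{\sum_i a_i}\}$ valid for $a_i\geq 0$, to combine the pieces into the asserted formula. The main obstacle is the second step: one must verify that the gluing formula is available in this generality and that the torus contribution is genuinely trivial under the given admissibility and infinite-order hypotheses. Once that is granted, the remaining combination is purely formal and the Main Theorem does the real geometric work on each Seifert piece.
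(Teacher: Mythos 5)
Your proposal is correct and follows essentially the intended route: the paper itself defers this corollary to \cite{DFL14a}, but the argument there (and the one the paper uses for its own torus-gluing steps, e.g.\ Lemma \ref{lem:LATKlein}) is exactly your combination of the JSJ decomposition, the gluing formula of Proposition \ref{pro:LATgluing}, the triviality $\LAT(T,\phi,\gamma)\doteq 1$ for JSJ tori with infinite image (which also follows from Lemma \ref{l2torofs1cwc} since $\chi^{S^1}_{\op{orb}}(T)=0$), and Eisenbud--Neumann additivity of the Thurston norm. The only points worth making explicit are that JSJ pieces have incompressible boundary, so no piece is $S^1\times D^2$ or $S^1\times S^2$ and Theorem \ref{maintheroem} applies, and that each JSJ torus contains a regular fiber of an adjacent piece, so its $\gamma$-image is indeed infinite.
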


Note that the next corollary was first proved by Ben Aribi \cite{BA13} using a somewhat different approach.
\begin{corollary}
Let $K\subset S^3$ be a oriented knot. Denote by $\nu K$ a tubular neighborhood of $K$ and by $\phi_K\in H^1(S^3\setminus\nu K)$ the cohomology class which sends the oriented meridian to $1$. Then $K$  is trivial if and only if $\LAT(S^3\setminus\nu K,\phi, \id )(t)\doteq \max\left\{1,t\right\}^{-1}$.
\end{corollary}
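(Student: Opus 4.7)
\emph{Forward direction.} If $K$ is trivial, then $M := S^3 \setminus \nu K$ is homeomorphic to the solid torus $S^1 \times D^2$, which is simple-homotopy equivalent to $S^1$. Under this identification $\pi_1(M) = \Z$ is generated by the meridian and $\phi_K$ is the identity. The associated two-term $\zt$-chain complex of the universal cover is $\zt \xrightarrow{1-t} \zt$, and the corresponding twisted $L^2$-Alexander operator is multiplication by $1 - t\cdot z$ on $\ell^2(\Z)$. By Jensen's formula, its Fuglede--Kadison determinant equals $\max\{1,t\}$, and the alternating product over the two chain degrees yields $\tau^{(2)}(M,\phi_K,\id)(t) \doteq \max\{1,t\}^{-1}$.

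\emph{Backward direction.} Suppose $\tau^{(2)}(M,\phi_K,\id) \doteq \max\{1,t\}^{-1}$. Specialising at $t=1$ (where the rescaling factor $t^r$ from $\doteq$ collapses to $1$) yields $\tau^{(2)}(M,\phi_K,\id)(1) = 1$; this value is (the exponential of) the classical $L^2$-torsion $\rho^{(2)}(M)$. L\"uck--Schick's theorem gives $\rho^{(2)}(M) = -\tfrac{1}{6\pi}\sum_{N}\mathrm{Vol}(N)$ summed over the hyperbolic JSJ pieces $N$, with Seifert pieces contributing zero, so $M$ has no hyperbolic piece, i.e., $M$ is a graph manifold. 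Because knot complements are aspherical with torsion-free fundamental group, each regular Seifert fiber has infinite order; assuming $M \neq S^1\times D^2$, Corollary \ref{cor:graph} then yields $\tau^{(2)}(M,\phi_K,\id) \doteq \max\{1,t^{x_M(\phi_K)}\}$. One checks by direct comparison that $\max\{1,t^n\}\not\doteq \max\{1,t\}^{-1}$ for any integer $n \geq 0$: matching on $t\leq 1$ (where both sides equal $1$) forces the shift exponent $r=0$, but then matching on $t\geq 1$ requires $t^n = t^{-1}$, contradicting $n = x_M(\phi_K) \geq 0$. Therefore $M = S^1 \times D^2$, so $K$ is trivial.

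The main obstacle is the $t=1$ specialisation step: identifying $\tau^{(2)}(M,\phi_K,\id)(1)$ with $\exp(\rho^{(2)}(M))$ and invoking L\"uck--Schick's volume formula together with JSJ-additivity of the $L^2$-torsion for manifolds with toroidal boundary. These are nontrivial but standard inputs from the $L^2$-invariants literature; once they are in place, the rest is a clean reduction to the Main Theorem and Corollary \ref{cor:graph}. A possible alternative route, avoiding the volume formula entirely, would be to prove a general lower bound on the ``degree at infinity'' of $\tau^{(2)}(M,\phi,\id)$ in terms of $x_M(\phi)$ even in the presence of hyperbolic pieces; this would directly rule out the exponent $-1$ appearing in $\max\{1,t\}^{-1}$ for non-trivial knots.
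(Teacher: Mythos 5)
Your proof is correct and follows essentially the same route as the paper: evaluate at $t=1$ and invoke L\"uck--Schick to rule out hyperbolic JSJ pieces, then apply Corollary \ref{cor:graph} to the resulting graph manifold and compare with $\max\{1,t\}^{-1}$. You are slightly more explicit than the paper in two places --- you verify the forward direction directly via the circle computation (Lemma \ref{l2torsionofacircle}), and you rule out $\max\{1,t^{n}\}\doteq\max\{1,t\}^{-1}$ by direct comparison rather than via the genus formula $x_M(\phi_K)=2\genus(K)-1$ --- but these are cosmetic differences, not a different argument.
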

\begin{proof}
	If the JSJ-decomposition of $S^3\setminus \nu K$ contains a hyperbolic piece, then by the work of L\"uck and Schick \cite{LS99} we have $\LAT(S^3\setminus \nu K, \phi,\id)(1)\neq 1$. If there is no hyperbolic piece, then $S^3\setminus \nu K$ is a graph manifold. Now the statement follows from Corollary \ref{cor:graph} and the well known formula $x_{S^3\setminus \nu K}(\phi_K)=2\cdot \genus (K) -1$, if $\genus(K)>0$.
\end{proof}
The proof of the main theorem is obtained from two lemmas. The first lemma characterizes the Thurston norm of a Seifert fiber space $M$ by the combinatorial invariant $\chi^{S^1}_{\text{orb}}(M)$ (see Definition \ref{def:orbchar}).
\begin{alphalemma}
	Let $M\neq S^1\times S^2,\ S^1\times D^2$ be a Seifert fibered space with infinite fundamental group. Then for any $\phi\in H^1(M;\R)$, we have 
	\begin{align*}
	x_M(\phi) = |\chi^{S^1}_{\op{orb}}(M)\cdot k_\phi |,
	\end{align*}
	where $k_\phi:= \phi ([F])$ and $F$ is a regular fiber.
\end{alphalemma}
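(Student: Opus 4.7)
My plan is to split the argument on whether $k_\phi = 0$, and in both cases to exploit the classical fact (Waldhausen) that every properly embedded two-sided incompressible surface in a Seifert fibered space $M$ is isotopic either to a \emph{vertical} surface (a union of fibers, hence tori, Klein bottles, annuli, or M\"obius bands) or to a \emph{horizontal} surface (everywhere transverse to the fibration). Since vertical components have $\chi_- = 0$, the Thurston complexity is concentrated in the horizontal piece, and the lemma ultimately reduces to an orbifold Euler characteristic calculation over the base orbifold $B$ of $M$.

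\textbf{Case $k_\phi = 0$.} The class $\phi$ vanishes on the fiber, so it descends to a class on the underlying space $|B|$ of the base orbifold. I would represent the Poincar\'e dual of the descended class by a properly embedded $1$-manifold $\gamma \subset |B|$ avoiding the cone points, and then set $S := p^{-1}(\gamma)$, where $p \colon M \to |B|$ is the Seifert projection. The surface $S$ is then a vertical surface dual to $\phi$ whose components are tori and annuli, so $\chi_-(S) = 0$ and hence $x_M(\phi) = 0 = |\chi^{S^1}_{\op{orb}}(M) \cdot 0|$.

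\textbf{Case $k_\phi \neq 0$.} For the upper bound I would realize $\phi$ by a map $f \colon M \to S^1$ and homotope it so that each regular fiber maps to $S^1$ as a covering of degree $|k_\phi|$; the preimage $S := f^{-1}(\text{pt})$ of a regular value is then a horizontal surface Poincar\'e dual to $\phi$. The restriction $p|_S$ is a degree-$|k_\phi|$ orbifold covering $S \to B$ (with branching only at cone points whose multiplicities do not divide $k_\phi$), so the orbifold Riemann--Hurwitz formula yields a $\chi(S)$ equal to $k_\phi$ times the orbifold Euler characteristic of $B$, which should match $-|\chi^{S^1}_{\op{orb}}(M) \cdot k_\phi|$ once the definition of $\chi^{S^1}_{\op{orb}}$ is unpacked. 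For the lower bound, I would take any norm-minimizing representative of $\phi$, make it incompressible and boundary-incompressible by standard compressions (which do not increase $\chi_-$), and then apply the horizontal/vertical dichotomy: the vertical components contribute nothing to $\chi_-$, while every horizontal component contributes precisely the quantity computed above, since its algebraic intersection number with a regular fiber equals $k_\phi$.

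The principal obstacle I expect is carrying out the orbifold Riemann--Hurwitz computation carefully enough to recover the combinatorial factors built into $\chi^{S^1}_{\op{orb}}(M)$ from the multiplicities at the exceptional fibers of $M$. A secondary point is ensuring the horizontal/vertical dichotomy is actually applicable: this is where the hypotheses $M \neq S^1 \times S^2, S^1 \times D^2$ and $|\pi_1(M)| = \infty$ enter, ruling out the small Seifert cases in which the dichotomy or the incompressibility reduction can fail.
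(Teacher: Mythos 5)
Your outline is correct, but it follows a genuinely different route from the paper. The paper never touches the vertical/horizontal dichotomy: it first passes to a finite cover $p\colon\widehat{M}\to M$ that is a principal $S^1$-bundle, invoking Gabai's theorem $x_M(\phi)=x_{\widehat{M}}(p^{\ast}\phi)/d$ together with the multiplicativity of $\chi^{S^1}_{\op{orb}}$ under covers, and then handles the two bundle cases by soft homological algebra — K\"unneth plus the fibered-face formula for $S^1\times\Sigma_g$, and the Gysin sequence for a nontrivial bundle (where it shows both sides vanish because $[F]$ is torsion and all of $H_2$ is carried by vertical tori). Your approach trades Gabai's deep covering theorem for the elementary Waldhausen--Hatcher classification of incompressible surfaces and an orbifold Riemann--Hurwitz count; it is more self-contained and actually exhibits the norm-minimizing surfaces, at the cost of more case-checking. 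A few points you should make explicit to close the argument: reduce from real to integral $\phi$ by homogeneity and continuity of $x_M$; note that for integral $\phi$ the relation $[F]=a_i[F_i]$ in $H_1(M;\Z)$ forces $a_i\mid k_\phi$, which is exactly the divisibility needed for your horizontal representative to exist (and observe that when no horizontal surface exists the Euler number is nonzero, $[F]$ is torsion, and you are automatically in the $k_\phi=0$ case); and in the lower bound replace ``each horizontal component has algebraic intersection number $k_\phi$ with the fiber'' by the inequality $\sum_j n_j\geq\bigl|\sum_j\pm n_j\bigr|=|k_\phi|$, where $n_j$ is the geometric intersection number of the $j$-th horizontal component with a regular fiber, since the signs need not be coherent (especially over a non-orientable base). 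With those repairs your proof is complete and arguably more transparent about where the factor $|\chi^{S^1}_{\op{orb}}(M)\cdot k_\phi|$ comes from.
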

The second lemma calculates the function $\LAT(X,\phi,\gamma)$ for spaces with a certain $S^1$-action.
\begin{alphalemma}\label{l2torofs1cwc}
		Let X be a connected $S^1$-CW-complex of finite type and $\phi\in H^1(X;\R)$.
		Suppose that for one and hence all $x\in X$ the map $ev_x\colon S^1\rightarrow X$ defined by $z\mapsto z\cdot x$ induces an injective map $\gamma\circ ev_x\colon \pi_1(S^1,1)\rightarrow G$. The composite
		\[\begin{tikzcd}
		\pi_1(S^1,1)\arrow{r}{ev_x}& \pi_1(X,x)\arrow{r}{\phi}&\R
		\end{tikzcd}\]
		is given by multiplication with a real number. Let $k_\phi$ denote this number.
		Define the $S^1$-orbifold Euler characteristic of X by 
		\[\chi_{\op{orb}}^{S^1} (X) = \sum_{n\geq 0} (-1)^n \cdot \sum_{i\in J_n}\frac{1}{|H_i|}\]
		where $J_n$ denotes the set of open $n$-cells and for $i\in J_n$ the set $H_i$ is the isotropy group of the corresponding cell. Then
		\[\tau^{(2)}(X,\phi,\gamma) \doteq \max \left\{1, t^{k_\phi}\right\}^{-\chi_{\op{orb}}^{S^1} (X)}.\]
\end{alphalemma}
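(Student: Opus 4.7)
The plan is to induct on the number of equivariant cells in an $S^1$-CW-decomposition of $X$, combining the multiplicativity of the $L^2$-Alexander torsion under cell attachments with its behaviour under products. Throughout, the hypothesis that $\gamma\circ ev_x$ is injective forces every isotropy group $H_i$ to be a finite cyclic subgroup of $S^1$ (infinite isotropy would kill the $\pi_1(S^1)$-image), so the statement even makes sense.

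For the base case $X=S^1/H$ (a single equivariant $0$-cell), $X$ is topologically $S^1$ and $ev_x\colon S^1\to X$ is the $|H|$-fold cover. Consequently $\phi$ sends the generator of $\pi_1(X)=\Z$ to $k_\phi/|H|$, and the standard computation \cite{DFL14a} of the $L^2$-Alexander torsion of the circle gives
\[\tau^{(2)}(X,\phi,\gamma)\doteq \max\{1,t^{k_\phi/|H|}\}^{-1}\doteq \max\{1,t^{k_\phi}\}^{-1/|H|},\]
where the second equivalence is a direct check on $t>0$. Since $\chi^{S^1}_{\op{orb}}(X)=1/|H|$, the claim holds.

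For the inductive step I would write $X=X'\cup_{f} (S^1/H\times D^n)$ as the attachment of a single equivariant $n$-cell along $S^1/H\times S^{n-1}\to X'$. Multiplicativity of $\tau^{(2)}$ under this cofibration gives
\[\tau^{(2)}(X,\phi,\gamma)\doteq \tau^{(2)}(X',\phi,\gamma)\cdot \tau^{(2)}(X,X',\phi,\gamma).\]
Because $D^n$ is simply connected, the restriction of $\phi$ to the new cell factors through the projection to the $S^1/H$-factor, so the product formula identifies the relative term with $\tau^{(2)}(S^1/H,\phi,\gamma)^{\chi(D^n,S^{n-1})}$. Using $\chi(D^n,S^{n-1})=(-1)^n$ together with the base case, this relative factor equals $\max\{1,t^{k_\phi}\}^{-(-1)^n/|H|}$, which is precisely the contribution of the attached cell to $\max\{1,t^{k_\phi}\}^{-\chi^{S^1}_{\op{orb}}(X)}$; combined with the inductive hypothesis for $X'$ the induction closes.

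The hard part will be the analytic bookkeeping, rather than the algebra: I must confirm that at every stage the relevant $L^2$-chain complexes are of determinant class, so that $\tau^{(2)}$ is a well-defined equivalence class of functions and the multiplicativity and product formulas hold without error terms. The injectivity hypothesis on $\gamma\circ ev_x$ is exactly what allows this, since it embeds $\pi_1(S^1/H)$ as an infinite cyclic subgroup of $G$ and reduces the determinant-class check to the known circle case. This injectivity is inherited by every equivariant subcomplex $X'$, so the induction is well-founded even if intermediate stages are disconnected.
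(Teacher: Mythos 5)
Your proposal is correct and follows essentially the same route as the paper: an induction over the equivariant cells, with the base case given by the known computation $\tau^{(2)}(S^1,\phi,\gamma)\doteq\max\{1,t^{\phi(g)}\}^{-1}$ for the circle (so that an orbit $S^1/H$ contributes $\max\{1,t^{k_\phi}\}^{-1/|H|}$), the gluing formula for the equivariant pushout attaching $S^1/H\times D^n$ along $S^1/H\times S^{n-1}$, and the identification of the relative term with $\tau^{(2)}(S^1/H)^{(-1)^n}$ via the product structure (what the paper calls the suspension isomorphism). Your additional remarks on finiteness of the isotropy groups and on determinant class are consistent with, and slightly more careful than, the paper's treatment.
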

The paper is organized as follows. In Section \ref{thurstonnormforseifertfiberspaces} we recall the definition of Seifert fiber spaces and prove Lemma \ref{thurstonnormforallSFS}. Afterwards we give in Section \ref{sec:basicLAT} the definition of the $L^2$-Alexander torsion and some basic properties. In the last part of the paper we prove Lemma \ref{l2torofs1cwc}. Note that not all Seifert fiber spaces admit an $S^1$-action and so we will prove the main result for the remaining cases in the remainder of this paper.
\subsection{Acknowledgements} I would like to thank my advisor Stefan Friedl for his many suggestions and his helpful advice. The article is based on work supported by SFB 1085 at the University of Regensburg, funded by the Deutsche Forschungsgemeinschaft (DFG). Moreover I wish to thank Fathi Ben Aribi for several helpful conversations.

\section{Thurston norm for Seifert fiber spaces}\label{thurstonnormforseifertfiberspaces}
\subsection{Preliminaries about Seifert fiber spaces}
We quickly recall the definition and basic facts about Seifert fiber spaces. Most results are taken from the survey article \cite{Sc83}.

\begin{definition}
A \emph{Seifert fibered space} is a 3-manifold $M$ together with a decomposition of $M$ into disjoint simple closed curves (called Seifert fibers) such that each Seifert fiber has a tubular neighborhood that forms a standard fibered torus. The standard fibered torus corresponding to a pair of coprime integers $(a;b)$ with $a > 0$ is the
surface bundle of the automorphism of a disk given by rotation by an angle of $2\pi b/a$,
equipped with the natural fibering by circles.

We call $a$ the index of a Seifert fiber. A fiber is \emph{regular} if the index is one and \emph{exceptional} otherwise.\end{definition}
\begin{remark}
	The number of exceptional fibers of a Seifert fiber space $M$ is finite.
\end{remark}

\begin{definition}\label{def:orbchar}
Let $M$ be a Seifert fibered space and $F_1,\ldots,F_n$ the exceptional fibers with corresponding indices  $a_1,\ldots,a_n$. We define
\[ \chi^{S^1}_{\op{orb}}(M):=\chi(M/S^1)- \sum_{i=1}^{n} \Big(1-\frac{1}{a_i}\Big),\]
where $M/S^1$ is obtained from $M$ by identifying all points in the same Seifert fiber.
\end{definition}
\begin{remark}
	In Scott's survey article this is the Euler number of the base orbifold $M/ S^1$.
\end{remark} 
Let $p\colon\widehat{M}\rightarrow M$ be a finite cover. If $M$ is Seifert fibered then $p$ induces a Seifert fiber structure on $\widehat{M}$ such that $p$ is a fiber preserving map. Therefore we get a induced branched cover $p\colon\widehat{M}/S^1\rightarrow M/S^1$. Denote by $l$ the degree of the branched cover. Then standard arguments show
\begin{align*}
l\cdot \chi^{S^1}_{\op{orb}}(M)= \chi^{S^1}_{\op{orb}}(\widehat{M}).
\end{align*}
For more detail we refer to \cite[Section 2 and 3]{Sc83}.

\subsection{Proof of Lemma \ref{thurstonnormforallSFS}}
Having all the notions to our hand we can prove:
\setcounter{lem}{0}
\begin{alphalemma}\label{thurstonnormforallSFS}
	Let $M\neq S^1\times S^2,\ S^1\times D^2$ be a Seifert fibered space with infinite fundamental group. Then for any $\phi\in H^1(M;\R)$, we have 
	\begin{align*}
	x_M(\phi) = |\chi^{S^1}_{\op{orb}}(M)\cdot k_\phi |,
	\end{align*}
	where $k_\phi:= \phi ([F])$ and $F$ is a regular fiber.
\end{alphalemma}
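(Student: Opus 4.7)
The plan is to reduce to integral classes by linearity, and then split according to whether $k_\phi$ vanishes. Both functions $\phi \mapsto x_M(\phi)$ and $\phi \mapsto |\chi^{S^1}_{\op{orb}}(M)\cdot k_\phi|$ are continuous semi-norms on $H^1(M;\R)$ (the first by Thurston's theorem, the second because $k_\phi$ is linear in $\phi$), so it suffices to establish the equality on a $\Q$-spanning subset of $H^1(M;\Z)$, and by a further homogeneity argument on primitive integer classes.

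Fix a primitive $\phi \in H^1(M;\Z)$ and let $S$ be a properly embedded oriented surface representing its Poincar\'e dual. By the classical theory of incompressible surfaces in Seifert fiber spaces (Waldhausen), when $\pi_1(M)$ is infinite a Thurston-norm minimizer $S$ can be isotoped so that each component is either \emph{horizontal} (transverse to every fiber) or \emph{vertical} (saturated by fibers). If $k_\phi = 0$, then $\phi$ vanishes on the regular fiber and hence pulls back from the base orbifold; a norm-minimizing representative can then be chosen vertical, i.e.\ a disjoint union of tori and annuli, giving $\chi_-(S) = 0 = |\chi^{S^1}_{\op{orb}}(M)\cdot k_\phi|$.

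Now suppose $k_\phi \neq 0$. Vertical components pair trivially with $[F]$, so any minimizer $S$ must contain a horizontal component. Restricting the Seifert fibration to a horizontal surface exhibits it as a branched cover $S \to M/S^1$ of degree $|k_\phi|$, branched precisely over the images of the exceptional fibers $F_1,\ldots,F_n$ with ramification $a_i$ over $F_i$. Riemann--Hurwitz then yields
\[\chi(S) \;=\; |k_\phi|\cdot \chi(M/S^1) - \sum_{i=1}^{n}|k_\phi|\Big(1-\frac{1}{a_i}\Big) \;=\; |k_\phi|\cdot \chi^{S^1}_{\op{orb}}(M).\]
Because $\pi_1(M)$ is infinite, the base orbifold is Euclidean or hyperbolic, so $\chi^{S^1}_{\op{orb}}(M) \leq 0$ and $S$ has no disk or sphere components; hence $\chi_-(S) = -\chi(S) = |\chi^{S^1}_{\op{orb}}(M)\cdot k_\phi|$. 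Minimality of $\chi_-(S)$ follows from the fact that, within a Waldhausen-type normal form, exchanging a horizontal piece for a vertical one can only preserve or increase $\chi_-$.

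The main obstacle I anticipate is the existence step in the horizontal case: a horizontal surface dual to $\phi$ exists only when an obstruction related to the orbifold Euler class of the fibration vanishes against $\phi$. One way around this, which I would adopt if needed, is to pass to a suitable finite cover $p\colon \widehat M \to M$ in which horizontal representatives do exist, and then descend the equality using the covering behaviour of both invariants: on the Thurston norm side one has $x_{\widehat M}(p^*\phi) = \deg(p) \cdot x_M(\phi)$ (for the regular covers one can arrange here), and on the combinatorial side the scaling relation $\chi^{S^1}_{\op{orb}}(\widehat M) = \deg(p)\cdot \chi^{S^1}_{\op{orb}}(M)$ already recalled in the text. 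A secondary technical point is to handle $\partial M \neq \emptyset$ carefully, ensuring that the horizontal representative is properly embedded with $\partial S \subset \partial M$, which is standard but should be written out.
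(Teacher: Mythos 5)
Your route is genuinely different from the paper's. The paper never invokes the vertical/horizontal dichotomy: it first reduces to principal $S^1$-bundles, using the fact that every Seifert fibered space is finitely covered by one together with Gabai's multiplicativity $x_{\widehat{M}}(p^{\ast}\phi)=\deg(p)\cdot x_M(\phi)$ and the matching scaling of $\chi^{S^1}_{\op{orb}}$ and $k_\phi$; it then treats $S^1\times\Sigma_g$ via the K\"unneth decomposition (the summand $H_1(\Sigma_g,\partial\Sigma_g)\otimes H_1(S^1)$ is represented by tori and annuli, so after subtracting it only the fiber class of $\Sigma_g\to M\to S^1$ remains, where the fibered-class formula applies), and treats non-trivial bundles by a Gysin-sequence computation showing both sides vanish. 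Your argument works directly on $M$ with essential-surface theory and Riemann--Hurwitz; it is the classical approach, identifies the actual minimizers, and avoids Gabai's theorem, at the cost of invoking Waldhausen's normal form. Incidentally, the existence obstruction you worry about largely evaporates: if $M$ is closed with nonzero Euler number then $[F]$ is torsion in $H_1(M;\Z)$, and if the base is non-orientable then $2[F]=0$; in either case $k_\phi=0$ for every real class and the horizontal case never arises, while for $e=0$ or $\partial M\neq\emptyset$ horizontal surfaces exist.

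The step I would not accept as written is the lower bound. ``Exchanging a horizontal piece for a vertical one can only preserve or increase $\chi_-$'' is not an argument: one cannot exchange components of a fixed embedded surface, and in any case it is the reverse inequality you need. The correct argument is a degree count. After compressions, boundary compressions and discarding inessential closed pieces (none of which increase $\chi_-$ or change the class), every component of a minimizer $S$ is vertical or horizontal. Vertical components are tori and annuli, contributing $0$ both to $\chi_-(S)$ and to $[S]\cdot[F]$; a horizontal component of branched-covering degree $d_j\geq 1$ contributes $d_j\,|\chi^{S^1}_{\op{orb}}(M)|$ to $\chi_-(S)$ and $\pm d_j$ to $[S]\cdot[F]$. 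Hence $|k_\phi|\leq\sum_j d_j$ and $\chi_-(S)\geq |k_\phi|\cdot|\chi^{S^1}_{\op{orb}}(M)|$. For the upper bound you also do not need an embedded horizontal surface dual to $\phi$ itself: take any horizontal $S_0$ of degree $d_0$, write $\phi=\tfrac{k_\phi}{d_0}\,PD[S_0]+\psi$ with $\psi([F])=0$, observe $x_M(\psi)=0$ by your vertical argument, and conclude by homogeneity and the triangle inequality for $x_M$. With these repairs your proof is complete and, modulo the standard surface theory it quotes, correct.
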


The proof will consist of the following steps. First we reduce the problem from Seifert fiber spaces to principal $S^1$-bundles. Then we look at the two cases of a trivial and non trivial principal $S^1$-bundle separately.
\begin{claim}
	It is sufficient to prove Lemma \ref{thurstonnormforallSFS} only for principal $S^1$-bundles.
\end{claim}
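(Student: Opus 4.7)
The plan is to construct a finite fiber-preserving cover $p\colon \widehat{M}\to M$ with $\widehat{M}$ a principal $S^1$-bundle, and then to deduce Lemma~\ref{thurstonnormforallSFS} for $M$ from the lemma applied to $\widehat{M}$ via three multiplicativity relations. Since $\pi_1(M)$ is infinite and $M\neq S^1\times S^2, S^1\times D^2$, the base orbifold $\mathcal{O}:=M/S^1$ is a good $2$-orbifold. By standard orbifold theory (Selberg's lemma applied to $\pi_1^{\op{orb}}(\mathcal{O})$), there exists a finite orbifold cover of $\mathcal{O}$ that is an honest surface, which pulls back to a fiber-preserving finite cover of $M$ with no exceptional fibers, hence a circle bundle. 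A further double cover if necessary orients the fibers and produces the desired principal $S^1$-bundle $\widehat{M}$ together with a fiber-preserving covering map $p$.

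Let $l$ denote the degree of the induced orbifold branched cover $\widehat{M}/S^1\to M/S^1$, and let $m$ denote the degree with which a regular fiber $\widehat{F}$ of $\widehat{M}$ covers a regular fiber $F$ of $M$, so that $\deg(p)=L:=lm$. I would combine three ingredients: (a) the multiplicativity of the $S^1$-orbifold Euler characteristic stated in the excerpt, $\chi^{S^1}_{\op{orb}}(\widehat{M})=l\cdot\chi^{S^1}_{\op{orb}}(M)$; (b) Gabai's multiplicativity of the Thurston norm under finite covers of $3$-manifolds, $x_{\widehat{M}}(p^*\phi)=L\cdot x_M(\phi)$; and (c) the identity $p_*[\widehat{F}]=m\cdot[F]$, which gives $k_{p^*\phi}=\phi(p_*[\widehat{F}])=m\cdot k_\phi$.

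Granting the lemma for the principal $S^1$-bundle $\widehat{M}$, I will write $x_{\widehat{M}}(p^*\phi)=|\chi^{S^1}_{\op{orb}}(\widehat{M})\cdot k_{p^*\phi}|$, substitute the identities from (a)--(c) to obtain $L\cdot x_M(\phi) = |l\cdot\chi^{S^1}_{\op{orb}}(M)\cdot m\cdot k_\phi| = L\cdot|\chi^{S^1}_{\op{orb}}(M)\cdot k_\phi|$, and divide by $L>0$ to conclude. This is exactly the statement of Lemma~\ref{thurstonnormforallSFS} for $M$, completing the reduction.

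The main obstacle is the existence of the cover to a principal $S^1$-bundle, which relies on the fact that every good $2$-orbifold admits a finite manifold cover. A subtler point is that Gabai's multiplicativity of the Thurston norm under finite covers is a deep general result, although in the Seifert fibered setting it can also be verified directly using horizontal and vertical surfaces. Once these inputs are in hand, the arithmetic that $L=lm$ cancels the combined factor $l\cdot m$ on the right-hand side is the essential algebraic content of the reduction.
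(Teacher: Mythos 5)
Your proposal is correct and follows essentially the same route as the paper: pass to a finite fiber-preserving cover by a principal $S^1$-bundle, then combine the multiplicativity of $\chi^{S^1}_{\op{orb}}$ under the degree-$l$ branched cover of base spaces, Gabai's multiplicativity of the Thurston norm under the degree-$lm$ cover, and the relation $k_{p^{\ast}\phi}=m\cdot k_\phi$, so that the factors cancel. The only cosmetic difference is that you sketch the existence of the covering principal $S^1$-bundle via orbifold covers and Selberg's lemma, whereas the paper simply cites \cite[Section 3.2(C.10)]{AFW15} for this fact.
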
 
\begin{proof}
	As shown in \cite[Section 3.2(C.10)]{AFW15} a Seifert fiber space is finitely covered by a principal $S^1$-bundle.  Let $p\colon\widehat{M}\rightarrow M$ be such a finite cover. We can pullback the Seifert fiber structure from $M$ to $\widehat{M}$. This structure and the structure of the $S^1$-bundle on $\widehat{M}$ coincide, because the Seifert fiber structure of an aspherical $S^1$-bundle is unique by the argument of \cite[Theorem 3.8]{Sc83}. Let $m$ denote the degree with which regular fibers of $\widehat{M}$ cover regular fibers of $M$. Then we have $m\cdot k_\phi=k_{p^{\ast}\phi}$. Denote by $l$ the degree of the induced branched cover $p\colon\widehat{M}/S^1\rightarrow M/S^1$. Then $l\cdot m$ is the degree of the cover $p\colon\widehat{M}\rightarrow M$. From the discussion above we deduce \[\chi^{S^1}_{orb}(M)\cdot k_{\phi} =  \frac{\chi^{S^1}_{orb}(\widehat{M}) \cdot k_{p^{\ast}\phi}}{d}.\]
	Furthermore Gabai showed in \cite[Corollary 6.13]{Ga83} for a finite cover of 3-manifold $p\colon\widehat{M}\rightarrow M$:
	\[x_M(\phi)=\frac{x_{\widehat{M}}(p^{\ast}\phi)}{d}.\]
	Putting together both equations, we see that it is enough to prove Lemma \ref{thurstonnormforallSFS} for $S^1$-bundles.
\end{proof}
For the next proof we need the following well known fact about fiber bundles and the Thurston norm. If $\Sigma_g\rightarrow M \xrightarrow{p} S^1$ is a fiber bundle and $\phi$ is given by $p_\ast\colon H_1(M;\R)\rightarrow H_1(S^1;\R)$, then 
 \begin{align}\label{thurstonnormfiberbundles}
  x_M(\phi) = \begin{cases}
 -\chi(\Sigma_g)&\text{if } \chi(\Sigma_g)<0, \\
 0&\text{else}.
 \end{cases}
 \end{align} 
\begin{lemma}\label{lem:trivialbundle}
	Let $\Sigma_g$ be a surface with $\chi(\Sigma_g)<0$. Consider $M=S^1\times \Sigma_g$ and $\phi\in H^1(M;\R)$. Then we have: 
		\[x_{M}(\phi)=|k_\phi\cdot \chi(\Sigma_g) |,\]
		where $k_\phi=\phi([F])$ and $F$ is regular fiber.
\end{lemma}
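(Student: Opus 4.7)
The plan is to construct an explicit smooth fibration $f\colon M\to S^1$ whose underlying cohomology class is $\phi$ and then read off $x_M(\phi)$ from the fibre by means of the fibre bundle formula~\eqref{thurstonnormfiberbundles}. By homogeneity and continuity of the Thurston semi-norm it suffices to verify the equality on integer classes.

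By the K\"unneth isomorphism one can write
\[ \phi = k_\phi\cdot p_1^{\ast}\alpha + p_2^{\ast}\phi_2, \]
where $p_1,p_2$ are the projections of $M=S^1\times\Sigma_g$, $\alpha$ generates $H^1(S^1;\Z)$, and $\phi_2\in H^1(\Sigma_g;\Z)$. When $k_\phi=0$, the class $\phi$ is Poincar\'e-dual to $S^1\times\gamma\subset M$, where $\gamma\subset\Sigma_g$ is a $1$-submanifold dual to $\phi_2$; this is a disjoint union of tori, and hence $x_M(\phi)=0=|k_\phi\chi(\Sigma_g)|$.

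When $k_\phi\neq 0$, after replacing $\phi$ by $-\phi$ if necessary I may assume $k_\phi>0$. Choose a smooth representative $g\colon\Sigma_g\to S^1=\R/\Z$ of $\phi_2$ and set
\[ f\colon M\longrightarrow S^1,\qquad f(s,x)=k_\phi s+g(x). \]
The differential $df_{(s,x)}(v,w)=k_\phi v+dg_x(w)$ is everywhere surjective because $k_\phi\neq 0$, so Ehresmann's theorem makes $f$ a smooth fibre bundle; by construction $f^{\ast}\alpha=\phi$. The fibre $F=f^{-1}(0)$ projects onto $\Sigma_g$ via $p_2$ as an unbranched $k_\phi$-sheeted cover, hence $\chi(F)=k_\phi\chi(\Sigma_g)<0$ and no component of $F$ is a sphere. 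The fibre bundle formula~\eqref{thurstonnormfiberbundles} then yields $x_M(\phi)=-\chi(F)=k_\phi|\chi(\Sigma_g)|$, as required.

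The only genuine subtlety is that~\eqref{thurstonnormfiberbundles} is stated for a connected fibre. If $F$ has $c>1$ components I would factor $f$ through the $c$-fold covering $S^1\to S^1$ to obtain a fibration $\tilde f\colon M\to S^1$ with connected fibre $\tilde F$ satisfying $c\cdot\chi(\tilde F)=\chi(F)$ and class $\phi/c$; applying~\eqref{thurstonnormfiberbundles} to $\tilde f$ together with the homogeneity relation $x_M(\phi)=c\cdot x_M(\phi/c)$ recovers the same equality. This disconnected-fibre bookkeeping is the principal technical point; constructing $f$, checking it is a submersion, and verifying $f^{\ast}\alpha=\phi$ are routine.
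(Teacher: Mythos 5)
Your proof is correct, and it takes a noticeably more constructive route than the paper's. Both arguments start from the same K\"unneth decomposition and both dispose of the classes with $k_\phi=0$ by representing them by products of circles with curves in $\Sigma_g$ (one small caveat: since $\Sigma_g$ is allowed to have boundary here, your $1$-submanifold $\gamma$ may contain properly embedded arcs, so $S^1\times\gamma$ is a union of tori \emph{and annuli} --- still of zero complexity, so nothing breaks, but the duality should be read in $H_1(\Sigma_g,\partial\Sigma_g)\otimes H_1(S^1)$). The divergence is in how the general class with $k_\phi\neq 0$ is handled. The paper uses a soft seminorm argument: since the summand $H_1(\Sigma_g,\partial\Sigma_g)\otimes H_1(S^1)$ has vanishing Thurston norm and vanishing $k_\phi$, both sides of the asserted identity are unchanged when an element of that summand is added, so by linearity of $k_\phi$ and homogeneity everything reduces to the single class Poincar\'e dual to $[\Sigma_g]$, where the product fibration and equation~\eqref{thurstonnormfiberbundles} finish the job. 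You instead build, for each integral class with $k_\phi\neq 0$, an explicit fibration $f(s,x)=k_\phi s+g(x)$ over $S^1$ whose fibre is a $k_\phi$-fold cover of $\Sigma_g$, and apply~\eqref{thurstonnormfiberbundles} directly; this requires the Ehresmann submersion check (which, for $\partial M\neq\emptyset$, you should note also holds on the boundary, as it does since $k_\phi\,\partial_s$ already surjects) and the disconnected-fibre bookkeeping you carry out, plus the density/homogeneity reduction to integral classes. What your approach buys is that it never needs the fact that a seminorm is constant on cosets of a norm-zero subspace, and it exhibits a norm-realizing fibred representative for every class with $k_\phi\neq0$; what the paper's approach buys is brevity, since only one fibration ever needs to be considered. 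Both are complete proofs.
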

\begin{proof}In the following homology and cohomology is understood with real coefficients.
	By the K\"unneth theorem $H^1(M)\cong H_2(M,\partial M)$ decomposes into:
	\[ H_2(M, \partial M) \cong H_2(\Sigma_g,\partial \Sigma_g)\  \oplus\  H_1(\Sigma_g,\partial \Sigma_g)\otimes H_1(S^1)\]
	Every generator of $H_1(\Sigma_g,\partial \Sigma_g)\otimes H_1(S^1)$ can be represented by a torus and therefore $H_1(\Sigma_g,\partial \Sigma_g)\otimes H_1(S^1)$ is a subspace with vanishing Thurston norm. The number $k_\phi$ can be interpreted as the intersection number of a regular fiber $F$ with the surface representing the cohomology class $\phi$. A surface representing $\phi\in H_1(\Sigma_g,\partial \Sigma_g)\otimes H_1(S^1)$ is parallel to a regular fiber and therefore $k_\phi=0$.
	
	Since $H_1(\Sigma_g,\partial \Sigma_g)\otimes H_1(S^1)$ is a subspace of vanishing Thurston norm, the Thurston norm of a class does not change, if we add elements of $H_1(\Sigma_g,\partial \Sigma_g)\otimes H_1(S^1)$.  
	Moreover $k_\phi$ is linear in $\phi$ and
	therefore the last open case is $PD(\phi)=[\Sigma_g]\in H_2(M,\partial M)$. This is equivalent with $\phi$ being the fiber class of the fibration $\Sigma_g \rightarrow S^1\times \Sigma_g \rightarrow S^1$ and there the formula holds by Equation (\ref{thurstonnormfiberbundles}) and the fact, that in this case $k_\phi=1$.
	
\end{proof}
\begin{lemma}
	Let $M$ be a non trivial $S^1$-bundle over a surface $\Sigma_g$. Then the following equation 
		\[x_{M}(\phi)=| k_\phi\cdot \chi(\Sigma_g) |\]
	holds for all $\phi\in H^1(M;\R)$.
\end{lemma}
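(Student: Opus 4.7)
My plan is to argue that both sides of the claimed equality vanish identically. First I will observe that if $\Sigma_g$ had non-empty boundary then $H^2(\Sigma_g;\Z)=0$ would force every $S^1$-bundle over $\Sigma_g$ to be trivial; hence $\Sigma_g$ must be closed, $M$ is a closed $3$-manifold, and the Euler class $e\in H^2(\Sigma_g;\Z)$ is non-zero. The case $g=0$ produces a lens space, for which $H^1(M;\R)=0$ and the statement is vacuous, so I may also assume $g\geq 1$ throughout.

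The main input will be the Gysin sequence of the oriented circle bundle $p\colon M\to \Sigma_g$ with real coefficients
\[0\longrightarrow H^1(\Sigma_g;\R)\xrightarrow{p^{*}} H^1(M;\R)\xrightarrow{\int_F} H^0(\Sigma_g;\R)\xrightarrow{\cup e} H^2(\Sigma_g;\R).\]
Since $e\neq 0$ in $H^2(\Sigma_g;\R)\cong\R$, the map $\cup e$ is injective, so $\int_F=0$. Concretely $\int_F\phi=\phi([F])=k_\phi$, so every $\phi\in H^1(M;\R)$ satisfies $k_\phi=0$ and the right-hand side $|k_\phi\cdot \chi(\Sigma_g)|$ of the desired equality is $0$.

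To handle the Thurston-norm side I will use the homogeneity and continuity of $x_M$ to reduce to the case of integral $\phi$. Exactness of the Gysin sequence gives $\phi=p^{*}\psi$ for some rational $\psi\in H^1(\Sigma_g;\Q)$; after choosing $N\in\N$ with $N\psi$ integral, I will represent the Poincar\'e dual of $N\psi\in H^1(\Sigma_g;\Z)$ by an embedded oriented multicurve $C\subset \Sigma_g$ and take its preimage $p^{-1}(C)\subset M$. By naturality of Poincar\'e duality under the submersion $p$, this preimage is an embedded surface dual to $N\phi$, and since every orientable $S^1$-bundle over a circle is a torus, $p^{-1}(C)$ is a disjoint union of tori. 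Hence $\chi_-(p^{-1}(C))=0$, which gives $x_M(N\phi)=0$ and therefore $x_M(\phi)=0$.

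The step I expect to require the most care is the identification of $p^{-1}(C)$ as Poincar\'e dual to $p^{*}(N\psi)$: this is a standard transversality computation for the submersion $p$, but it is the only place where the bundle structure really enters the Thurston-norm side, so I will verify it carefully by pairing $p^{-1}(C)$ against $1$-cycles $\gamma\subset M$ and using that $\#(\gamma\cap p^{-1}(C))=\#(p_{*}\gamma\cap C)=(N\psi)(p_{*}\gamma)=(p^{*}N\psi)(\gamma)$.
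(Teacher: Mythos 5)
Your proof is correct and takes essentially the same route as the paper's: both arguments run the Gysin sequence of the circle bundle twice, once to conclude $k_\phi=0$ from the (injectivity/surjectivity of the) cup product with the nonzero Euler class, and once to see that every class in $H^1(M;\R)$ pulls back from the base, whose Poincar\'e dual is then realized by a union of vertical tori of zero complexity. Your added care about the base being closed, the $g=0$ case, and the explicit intersection-number verification of the duality $PD(p^*N\psi)=[p^{-1}(C)]$ only makes explicit what the paper cites or leaves implicit.
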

\begin{proof}
	As in the proof of Lemma \ref{lem:trivialbundle} homology and cohomology is understood with real coefficients.
	We will in fact proof that both sides are equal to zero. To calculate $H^1(M)$ we look at a part of the Gysin short exact sequence \cite[Theorem 13.2]{Br93}:
		\[
		\begin{tikzcd}
		0 \arrow{r}& H^1(\Sigma_g)\arrow{r}{p^\ast}& H^1(M) \arrow{r}& H^0(\Sigma_g)\arrow{r}{\cup\, e}& H^2(\Sigma_g)\arrow{r}&\ldots 
		\end{tikzcd}
		\]
Here $\cup\, e$ means the cup product with the Euler class $e$ associated to the $S^1$-bundle $M$. This is an isomorphism because the bundle is not trivial. Therefore $p^{\ast}$ is an isomorphism too. We apply Poincar\'e duality and get an isomorphism $p_{!}:H_1(\Sigma_g)\rightarrow H_2(M)$. By the argument of \cite[Section 4]{GH14} an element $c\in H_1(\Sigma_g;\Z)$ will be sent by $p_{!}$ to a class in $H_2(M)$ representable by the product of a regular fiber with curves in  $\Sigma_g$ representing $c$. This is a collection of tori and annuli. We conclude that all elements in $H_2(M)$ have trivial Thurston norm. We have a second look at the Gysin sequence to show that $0=[F]\in H_1(M)$ for a regular fiber $F$.
\[
\begin{tikzcd}
\ldots\arrow{r}& H^0(\Sigma_g)\arrow{r} {\cup\, e}& H^2(\Sigma_g)\arrow{r}{p^{\ast}}& H^2(M) \arrow{r} &\ldots
\end{tikzcd}
\]
We change again via Poincar\'e duality to homology and obtain the map  $p_{!}\colon H_0(\Sigma_g)\rightarrow H_1(M)$, $[x_0]\mapsto [F]$ which sends a point to a regular fiber $F$. This map is trivial, because $\cup\, e$ is surjective. Hence $[F]=0$ and we conclude $k_\phi=\phi([F])=0$.

\end{proof}
\section{Definition and basic properties of the $L^2$-Alexander torsion}\label{sec:basicLAT}
In the monograph \cite{Lu02} L\"uck defines the $L^2$-torsion $\rho^{(2)}(C_\ast)\!\in\!\R$ of a finite Hilbert-$\mathcal{N}(G)$ chain complex of determinant class $C_\ast$. We refer to \cite[Chapter 3]{Lu02} for the precise definition and basic properties.
\begin{definition}
	Let $X$ be a connected finite CW-complex, $\phi\in H^{1}(X,\R)=\hom(\pi_1(X),\R)$, and $\gamma\colon  \pi_1(X)\rightarrow G$ a group homomorphism.  We call $(X,\phi,\gamma)$ an \emph{admissible triple} if $\phi \colon \pi_1(X)\rightarrow\R$ factors through $\gamma$ i.e.\ there exists $\phi'\colon G\rightarrow\R$ such that $\phi=\phi'\gamma$.
\end{definition}
An admissible triple $(X,\phi,\gamma)$ and positive number $t\in \R_{>0}$ give rise to a ring homomorphism:
\begin{align*}
\kappa(\phi,\gamma,t)\colon\R[\pi_1(X)] &\longrightarrow \R[G] \\
\sum_{i=1}^{n}a_i g_i&\longmapsto \sum_{i=1}^{n}a_it^{\phi(g_i)} \gamma (g_i).
\end{align*}

\begin{definition}[$L^2$-Alexander torsion]
	Let $(X,\phi,\gamma)$ be an admissible triple. We write $C_\ast^{\phi,\gamma,t}:= l^2(G)\otimes_{\kappa(\phi,\gamma,t)} C_\ast (\widetilde{X})$, where $\widetilde{X}$ is the universal cover and consider the function
	\[\LAT(X,\phi,\gamma)(t):= \begin{cases} \exp\left(-\rho^{(2)} \big(C^{\phi,\gamma,t}_\ast \big)\right) & C^{\phi,\gamma,t}_\ast \text{ {\footnotesize  is of determinant class and weakly acyclic,}} \\
	0 &\text{else.}
	\end{cases}\]
This function may not be continuous in general, but Liu showed  that in the case of a 3-manifold and $\gamma=\id$ this function is always greater than zero and continuous \cite[Theorem 1.2]{Li15}.
\end{definition}
We can define the $L^2$-Alexander torsion for a pair of spaces $(X,Y)$ in the following way. Let $(X,\phi,\gamma)$ be an admissible triple and $Y\subset X$ a subcomplex. We denote by $p:\widetilde{X}\rightarrow X$ the universal cover of $X$. We write $\widetilde{Y}=p^{-1}(Y)$. Then  $C_\ast(\widetilde{X},\widetilde{Y})$ is a free left $\Z[\pi_1(\widetilde{X})]$-chain complex. We write $C_\ast^{\phi,\gamma,t}:= l^2(G)\otimes_{\kappa(\phi,\gamma,t)} C_\ast (\widetilde{X},\widetilde{Y})$ and define the $L^2$-Alexander torsion $\LAT(X,Y,\phi,\gamma)(t)$ as before.

We will make use of the following two proposition. The proofs are straight forward. One applies \cite[Theorem 3.35(1)]{Lu02} to the short exact sequences of the chain complexes in question.
 \begin{proposition}[Product formula]\label{torsionofpair}
 	Let $(X,\phi,\gamma)$ be an admissible triple and  $i\colon Y\hookrightarrow X$ a subcomplex. If two out of three of $\LAT({X,\phi,\gamma})$, $\LAT({X,\phi i_\ast,\gamma i_\ast})$ and $\LAT({X,Y,\phi,\gamma})$ are nonzero, then we have the identity:
 	\[\LAT({Y,\phi i_{\ast},\gamma i_{\ast}})\cdot \LAT({X,Y,\phi,\gamma})\doteq \LAT({X,\phi,\gamma})
 	 .\]
 	
 \end{proposition}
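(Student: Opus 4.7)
The plan is to follow exactly the strategy sketched after the statement: produce a short exact sequence of finite Hilbert-$\mathcal{N}(G)$ chain complexes relating the three chain complexes whose $L^2$-torsions appear in the formula, and apply the additivity theorem \cite[Theorem 3.35(1)]{Lu02}. This will give an additive relation among $L^2$-torsions, which becomes the claimed multiplicative relation after applying $\exp(-\,\cdot\,)$.

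To set this up, let $p\colon \widetilde{X}\to X$ be the universal cover and write $\widetilde{Y}:=p^{-1}(Y)$. The tautological short exact sequence of free $\Z[\pi_1(X)]$-chain complexes
\[0\longrightarrow C_\ast(\widetilde{Y})\longrightarrow C_\ast(\widetilde{X})\longrightarrow C_\ast(\widetilde{X},\widetilde{Y})\longrightarrow 0\]
splits in each degree, so tensoring with $l^2(G)$ along $\kappa(\phi,\gamma,t)$ preserves exactness and yields a short exact sequence of finite Hilbert-$\mathcal{N}(G)$ chain complexes. By construction the middle and right-hand terms compute $\LAT(X,\phi,\gamma)(t)$ and $\LAT(X,Y,\phi,\gamma)(t)$, respectively. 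To identify the left-hand term with the complex computing $\LAT(Y,\phi i_\ast,\gamma i_\ast)(t)$, note that $\widetilde{Y}\to Y$ is the covering classified by $\ker(\pi_1(Y)\to\pi_1(X))$, so as a $\Z[\pi_1(X)]$-chain complex
\[C_\ast(\widetilde{Y})\ \cong\ \Z[\pi_1(X)]\otimes_{\Z[\pi_1(Y)]} C_\ast(\widetilde{Y}_{\mathrm{univ}}),\]
where $\widetilde{Y}_{\mathrm{univ}}\to Y$ is the universal cover of $Y$. A change-of-rings computation identifies $l^2(G)\otimes_{\kappa(\phi,\gamma,t)}C_\ast(\widetilde{Y})$ with $l^2(G)\otimes_{\kappa(\phi i_\ast,\gamma i_\ast,t)}C_\ast(\widetilde{Y}_{\mathrm{univ}})$, as desired.

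With these three Hilbert-$\mathcal{N}(G)$ chain complexes in place, Theorem 3.35(1) of \cite{Lu02} applies directly: since by hypothesis two of them are weakly acyclic and of determinant class, so is the third, and the $L^2$-torsions satisfy
\[\rho^{(2)}\bigl(C_\ast^{\phi,\gamma,t}(X)\bigr)\ =\ \rho^{(2)}\bigl(C_\ast^{\phi i_\ast,\gamma i_\ast,t}(Y)\bigr)\ +\ \rho^{(2)}\bigl(C_\ast^{\phi,\gamma,t}(X,Y)\bigr).\]
Applying $\exp(-\,\cdot\,)$ turns this sum into the product claimed in the proposition, valid for each $t>0$, which in particular is an equivalence in the sense of $\doteq$. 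The only mild technical obstacle is the bookkeeping in the previous paragraph, namely matching up the induced cover $p^{-1}(Y)$ (which appears naturally in the relative complex) with the universal cover of $Y$ (which is used in the \emph{a priori} definition of $\LAT(Y,\phi i_\ast,\gamma i_\ast)$); once that identification is checked, the conclusion is a formal consequence of L\"uck's additivity theorem.
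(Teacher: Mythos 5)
Your proposal is correct and follows exactly the route the paper indicates: apply L\"uck's additivity theorem \cite[Theorem 3.35(1)]{Lu02} to the short exact sequence $0\to C_\ast(\widetilde{Y})\to C_\ast(\widetilde{X})\to C_\ast(\widetilde{X},\widetilde{Y})\to 0$ after tensoring with $l^2(G)$, with the change-of-rings identification of $C_\ast(p^{-1}(Y))$ with the induced complex from the universal cover of $Y$ being the only bookkeeping step. The paper leaves this as a one-line remark, so your write-up simply supplies the details of the same argument.
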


\begin{proposition}[Gluing formula]\label{pro:LATgluing}
	Consider a pushout diagram of finite CW-complexes
	\[\begin{tikzcd}
	X_0 \arrow{r}{i_1} \arrow{d}{i_2}&X_{1}\arrow{d}{j_2} \\
	X_2 \arrow{r}{j_2}& X_3
	\end{tikzcd}\]
	 such that every map is cellular and $i_1$ is injective. Let $(X_3,\phi,\gamma)$ be an admissible triple. If three out of four of $\LAT(X_0,\phi i_1 j_1, \gamma i_1 j_1),\,\LAT(X_1,\phi j_1,\gamma j_1),\,\LAT(X_2,\phi j_2,\gamma j_2)$ or $\LAT(X_3,\phi,\gamma)$ are nonzero, then we have:
	 \[\LAT(X_3,\phi,\gamma)\cdot \LAT(X_0,\phi i_1 j_1, \gamma i_1 j_1)\doteq \LAT(X_2,\phi j_2,\gamma j_2)\cdot\LAT(X_1,\phi j_1,\gamma j_1).  \]
\end{proposition}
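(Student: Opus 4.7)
The plan is to deduce the formula from the multiplicativity of $L^2$-torsion on short exact sequences of finite Hilbert $\mathcal{N}(G)$-chain complexes, namely \cite[Theorem 3.35(1)]{Lu02}, exactly as indicated in the remark preceding the proposition. The input will be a Mayer--Vietoris short exact sequence of equivariant cellular chain complexes attached to the pushout, twisted by $\kappa(\phi,\gamma,t)$.

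First I would let $p\colon\widetilde{X_3}\to X_3$ be the universal cover and set $\widetilde{X_i}:=p^{-1}(X_i)\subset\widetilde{X_3}$ for $i=0,1,2$. Because every structure map is cellular and $i_1$ is injective, the classical Mayer--Vietoris construction produces a short exact sequence of free $\Z[\pi_1(X_3)]$-chain complexes
\[0\longrightarrow C_\ast(\widetilde{X_0})\longrightarrow C_\ast(\widetilde{X_1})\oplus C_\ast(\widetilde{X_2})\longrightarrow C_\ast(\widetilde{X_3})\longrightarrow 0.\]
Applying $l^2(G)\otimes_{\kappa(\phi,\gamma,t)}(-)$ yields a short exact sequence of finite Hilbert $\mathcal{N}(G)$-chain complexes. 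The right-hand term is by definition the complex computing $\LAT(X_3,\phi,\gamma)$, and for $i\in\{0,1,2\}$ the standard fact that the twisted $l^2$-complex depends only on the induced $G$-cover gives a canonical isomorphism of the corresponding term with the complex computing $\LAT(X_i,\phi\,(j_i)_\ast,\gamma\,(j_i)_\ast)$, where we set $j_0:=j_1\circ i_1=j_2\circ i_2$.

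The hypothesis that three of the four $L^2$-Alexander torsions are nonzero means that three of the four terms in the short exact sequence are of determinant class and weakly acyclic; by \cite[Theorem 3.35(1)]{Lu02} the fourth enjoys the same property, and the corresponding $L^2$-torsions satisfy
\[\rho^{(2)}\bigl(C_\ast^{\phi i_1 j_1,\gamma i_1 j_1,t}\bigr)+\rho^{(2)}\bigl(C_\ast^{\phi,\gamma,t}\bigr)=\rho^{(2)}\bigl(C_\ast^{\phi j_1,\gamma j_1,t}\bigr)+\rho^{(2)}\bigl(C_\ast^{\phi j_2,\gamma j_2,t}\bigr).\]
Exponentiating recovers the claimed relation up to $\doteq$. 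The main obstacle is the identification at the end of the second paragraph: when $(j_i)_\ast\colon\pi_1(X_i)\to\pi_1(X_3)$ fails to be injective, $p^{-1}(X_i)$ is not the universal cover of $X_i$ but a disjoint union of copies of a proper quotient of it, and one must check that tensoring over $\kappa(\phi,\gamma,t)$ still produces the intended Hilbert $\mathcal{N}(G)$-complex; this rests precisely on the assumption that $\phi$ factors through $\gamma$, so that the twisting does not see the kernel of $(j_i)_\ast$.
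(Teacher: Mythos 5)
Your proposal is correct and is essentially the paper's own argument: the paper only remarks that one applies \cite[Theorem 3.35(1)]{Lu02} to the relevant short exact sequence, which in this case is exactly the Mayer--Vietoris sequence of preimages in the universal cover of $X_3$ that you construct. Your additional care about identifying $l^2(G)\otimes_{\Z[\pi_1(X_3)]}C_\ast(p^{-1}(X_i))$ with the complex built from the universal cover of $X_i$ (via $C_\ast(p^{-1}(X_i))\cong\Z[\pi_1(X_3)]\otimes_{\Z[\pi_1(X_i)]}C_\ast(\widetilde{X_i})$ and associativity of the tensor product) is a worthwhile expansion of a point the paper leaves implicit.
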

\subsection{The $L^2$-Alexander torsion for $S^1$-CW-complexes}\label{l2alextorfors1cwcomplex}
The calculation of the $L^2$-Alexander torsion of a circle can be found in \cite[Lemma 2.8]{DFL14a}. It is the starting point of the proof of Lemma \ref{l2torofs1cwc}.
\begin{lemma}\label{l2torsionofacircle}
	Let $(S^1,\phi,\gamma)$ be an admissible triple and $\gamma$ injective, then the $L^2$-Alexander torsion is given by
	\[ \LAT(S^1,\phi,\gamma)\doteq \max\left\{1,t^{\phi(g)}\right\}^{-1},\]
	where $g$ is a generator of $\pi_1(S^1)$.
\end{lemma}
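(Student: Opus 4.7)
The plan is to read off $\tau^{(2)}$ directly from its definition by writing down the $\pi_1(S^1)$-equivariant cellular chain complex of $\widetilde{S^1}=\R$. First I would equip $S^1$ with the standard CW-structure having one $0$-cell and one $1$-cell; then $C_\ast(\widetilde{S^1})$ is the two-term $\Z[\pi_1(S^1)]$-complex
\[
0\longrightarrow \Z[\pi_1(S^1)]\xrightarrow{\cdot (1-g)}\Z[\pi_1(S^1)]\longrightarrow 0,
\]
where $g$ is the chosen generator. Applying the ring homomorphism $\kappa(\phi,\gamma,t)$ and tensoring with $\ell^2(G)$, the resulting complex $C_\ast^{\phi,\gamma,t}$ is concentrated in degrees $0$ and $1$, and its single differential is right multiplication by the element $a(t):=1-t^{\phi(g)}\gamma(g)\in\R[G]\subset \mathcal{N}(G)$.

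Next I would compute the Fuglede--Kadison determinant of the bounded operator $r_{a(t)}\colon \ell^2(G)\to\ell^2(G)$ and verify the weak acyclicity and determinant-class hypotheses. Because $\gamma$ is injective, $h:=\gamma(g)$ has infinite order, so the cyclic subgroup $\langle h\rangle\subset G$ is isomorphic to $\Z$. I would exploit invariance of the Fuglede--Kadison determinant under restriction to subgroups to reduce to a computation inside $\mathcal{N}(\Z)\cong L^\infty(S^1)$, where $r_{a(t)}$ becomes multiplication by $z\mapsto 1-t^{\phi(g)}z$ under the Fourier isomorphism. Jensen's formula then yields
\[
\det_{\mathcal{N}(G)}\bigl(r_{a(t)}\bigr)=\exp\int_0^1\log\bigl|1-t^{\phi(g)}e^{2\pi i\theta}\bigr|\,d\theta=\max\{1,t^{\phi(g)}\},
\]
which is positive, so the complex is of determinant class. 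The same identification shows $r_{a(t)}$ becomes multiplication by a function vanishing on at most one point of $S^1$, hence has trivial kernel and dense range, giving weak acyclicity.

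With the determinant in hand, the only nonzero contribution to $\rho^{(2)}(C_\ast^{\phi,\gamma,t})$ comes from the degree-$1$ differential, and unwinding the signs in the definition gives $\rho^{(2)}=\log\max\{1,t^{\phi(g)}\}$, so that
\[
\tau^{(2)}(S^1,\phi,\gamma)(t)=\exp\bigl(-\rho^{(2)}(C_\ast^{\phi,\gamma,t})\bigr)\doteq \max\{1,t^{\phi(g)}\}^{-1},
\]
as required. The only non-routine ingredient is the Fuglede--Kadison determinant calculation above; its success depends crucially on $h$ having infinite order, which is precisely where the injectivity hypothesis on $\gamma$ is used. Without it, $h$ could be a root of unity and Jensen's formula would not produce this clean value.
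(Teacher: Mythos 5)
Your computation is correct and is essentially the argument behind the result the paper simply cites (\cite[Lemma 2.8]{DFL14a}): reduce to the operator $1-t^{\phi(g)}\gamma(g)$ on $\ell^2(G)$ coming from the standard CW-structure on $S^1$, restrict the Fuglede--Kadison determinant to the infinite cyclic subgroup generated by $\gamma(g)$, and evaluate via Jensen's formula. The paper gives no independent proof, so your write-up matches the intended one; the injectivity of $\gamma$ is used exactly where you say it is.
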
 
\begin{alphalemma}
	Let X be a connected $S^1$-CW-complex of finite type and $\phi\in H^1(X;\R)$.
	Suppose that for one and hence all $x\in X$ the map $ev_x\colon S^1\rightarrow X$ defined by $z\mapsto z\cdot x$ induces an injective map $\gamma\circ ev_x\colon \pi_1(S^1,1)\rightarrow G$. The map $\phi\circ ev_x\colon H_1(S^1)\rightarrow \R$
	is given by multiplication with a real number which we denote by $k_\phi$. We obtain:
	\[\tau^{2}(X,\phi,\gamma)\doteq \max \left\{1, t^{k_\phi}\right\}^{- \chi_{\op{orb}}^{S^1} (X)}\]
\end{alphalemma}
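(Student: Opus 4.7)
The plan is to prove the formula by induction on the number of equivariant cells, with the gluing formula (Proposition \ref{pro:LATgluing}) handling the inductive step and Lemma \ref{l2torsionofacircle} handling the base case.

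For the base case, $X = S^1/H$ is a single $S^1$-orbit. The injectivity hypothesis forces $H$ to be finite cyclic of some order $a$, and the evaluation $ev_x \colon S^1 \to X$ is then a degree-$a$ cover. So if $\alpha$ generates $\pi_1(X) \cong \Z$, then $\phi(\alpha) = k_\phi/a$, and Lemma \ref{l2torsionofacircle} gives $\tau^{(2)}(X) \doteq \max\{1, t^{k_\phi}\}^{-1/a}$, which matches the claim since $\chi^{S^1}_{\op{orb}}(S^1/H) = 1/a$.

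Before the induction proper, I compute the torsion of the two spaces arising in any attachment, $S^1/H \times D^k$ and $S^1/H \times S^{k-1}$. For $S^1/H \times D^k$, since $D^k$ is simply connected, the universal cover factors as $\R \times D^k$ and the cellular chain complex of the universal cover splits as a tensor product $C_\ast(\R) \otimes_\Z C_\ast(D^k)$. Applying the standard product formula for $L^2$-torsion to the $l^2$-twisted chain complex yields $\tau^{(2)}(S^1/H \times D^k) \doteq \tau^{(2)}(S^1/H)^{\chi(D^k)} = \max\{1, t^{k_\phi}\}^{-1/|H|}$, matching $\chi^{S^1}_{\op{orb}}(S^1/H \times D^k) = 1/|H|$. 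For $S^1/H \times S^{k-1}$ with $k \geq 3$, the sphere is simply connected and the argument is analogous; for $k = 2$, I use the gluing formula applied to the pushout decomposition of $S^1$ as $\op{pt} \cup_{S^0} D^1$, substituting the formula for $S^1/H \times D^1$; for $k = 1$, the space is the disjoint union of two copies of $S^1/H$, so $\tau^{(2)}$ is the product of the two factor torsions.

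For the inductive step, suppose $X$ has $N \geq 2$ equivariant cells, pick a top-dimensional cell $e = S^1/H \times D^k$, and let $Y \subset X$ be its complement (a subcomplex with $N - 1$ equivariant cells). The gluing formula applied to the attaching pushout gives
\[ \tau^{(2)}(X) \doteq \tau^{(2)}(Y) \cdot \tau^{(2)}(S^1/H \times D^k) \cdot \tau^{(2)}(S^1/H \times S^{k-1})^{-1}. \]
Substituting the inductive hypothesis for $Y$ together with the two auxiliary computations, and invoking the additivity identity $\chi^{S^1}_{\op{orb}}(X) = \chi^{S^1}_{\op{orb}}(Y) + (-1)^k/|H|$, yields the claimed formula for $X$. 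The main obstacle is that the inductive scheme forces us to deal with disconnected intermediate spaces, either when $e$ is a $0$-cell (so $Y$ may become disconnected) or when $k = 1$ (so the attaching sphere is $S^0$). I handle this by phrasing the inductive statement for finite-type $S^1$-CW-complexes that need not be connected, provided each component inherits an admissible triple with the same value of $k_\phi$---a condition automatically satisfied by every subcomplex of a connected ambient $S^1$-CW-complex.
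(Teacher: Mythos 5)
Your proof is correct and follows essentially the same route as the paper: induction over the equivariant cell structure, with the circle computation (Lemma \ref{l2torsionofacircle}) as the base case and the gluing and product formulas (Propositions \ref{pro:LATgluing} and \ref{torsionofpair}) driving the induction, the key numerical input being $\tau^{(2)}(S^1/H)\doteq \max\{1,t^{k_\phi}\}^{-1/|H|}$. The only organizational differences are that the paper inducts on the dimension of the skeleta, attaching all $n$-cells at once and evaluating the relative torsion of $(S^1/H_i\times D^n, S^1/H_i\times S^{n-1})$ by a suspension argument --- which amounts to your exponent $\chi(D^n)-\chi(S^{n-1})=(-1)^n$ --- rather than proceeding cell by cell with absolute torsions, and that you explicitly address the possible disconnectedness of intermediate subcomplexes, a point the paper leaves implicit.
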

This proof is a variation of the proof \cite[Theorem 3.105]{Lu02}:
\begin{proof}
	We use induction over the dimension of cells. In dimension zero $X$ is a circle. There the statement holds by Lemma \ref{l2torsionofacircle}. Now the induction step from $n-1$ to $n$ is done as follows.
	Per definition of an $S^1$-CW-complex we can choose an equivariant $S^1$-pushout with $\dim (X_{n})=n$
	\[\begin{tikzcd}
	\bigsqcup_{i\in J_n} S/H_i\times S^{n-1} \arrow{r}{\bigsqcup q_i}\arrow{d}{i}&X_{n-1}\arrow{d}{j} \\
	\bigsqcup_{i\in J_n} S/H_i\times D^{n} \arrow{r}{\bigsqcup Q_i}& X_n\ .
	\end{tikzcd}\]
	We obtain from the gluing formula \ref{pro:LATgluing}:
	\[\prod_{i\in J_n}\LAT ({\displaystyle S^1/H_i\!\times\! D^n,S^1/H_i\!\times\! S^{n-1},\phi Q_i ,\gamma Q_i})\doteq \LAT(X_n,X_{n-1},\phi, \gamma).\]
	The left hand side can be computed by the suspension isomorphism.
	
	\begin{align*}
	\prod_{i\in J_n} \LAT({\displaystyle S^1/H_i\!\times\! D^n,S^1/H_i\!\times\! S^{n-1},\phi Q_i ,\gamma Q_i}) &\doteq \prod_{i\in J_n} \LAT({\widetilde{S^1},\phi Q_i,\gamma Q_i}) ^{(-1)^{n} }  \\ &\doteq  \prod_{i\in J_n}\max\left\{1,t^{k_\phi}\right\}^{(-1)^{n+1}\cdot 1/|H_i|} \\
	&=\max \left\{1,t^{k_\phi}\right\}^{(-1)^{n+1}\sum_{i\in J_n}1/|H_i|}. 
	\end{align*}
	Here we used the assumption that $\gamma\circ ev_x$ is injective. The torsion for $X_{n-1}$ is defined by induction hypothesis. We can apply the product formula \ref{torsionofpair} and conclude:
	\begin{align*}
	\LAT({X_n,\phi, \gamma})&=\LAT({X_n,X_{n-1}\phi, \gamma})\cdot \LAT (X_{n-1},\phi i, \gamma i) \\
	&\doteq \max \left\{1,t^{k_\phi}\right\}^{(-1)^{n+1}\sum_{i\in J_n}1/|H_i|} \cdot \max \left\{1,t^{k_\phi } \right\}^{-\chi^{S^1}_{\op{orb}}(X_{n-1})} \\
	&= \max \left\{1, t^{k_\phi}\right\}^{-\chi_{\op{orb}}^{S^1} (X)}.
	\end{align*}
	
\end{proof}

\begin{corollary}\label{cor:Mainthm}
	Let $M$ be an aspherical Seifert fiber space with $M\neq S^1\times D^2$ and $(M,\phi,\gamma)$ an admissible triple, such that the image of a regular fiber under $\gamma$ is an element with infinite order. Assume that $M/S^1$ is orientable, then the  $L^2$-Alexander torsion is given by
	\[ \tau^{(2)}(X,\phi,\gamma) \doteq \max \left\{1, t^{x_M(\phi)}\right\}.\]
\end{corollary}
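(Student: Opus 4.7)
The plan is to combine the two earlier lemmas by first equipping $M$ with an $S^1$-CW-structure compatible with the Seifert fibration. Because $M/S^1$ is orientable and $M$ is aspherical with $M\neq S^1\times D^2$, the Seifert fibration of $M$ arises from an effective $S^1$-action whose orbit space is the surface $M/S^1$, and the isotropy group of the exceptional fiber $F_i$ is the cyclic group $\Z/a_i$. Concretely, I would choose a CW-structure on $M/S^1$ in which the image of each exceptional fiber $F_i$ appears as a $0$-cell, and then lift it equivariantly to an $S^1$-CW-structure on $M$: a cone $0$-cell of index $a_i$ lifts to an orbit $S^1/(\Z/a_i)$, while every other cell lifts to a free $S^1$-orbit of the same dimension.

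With this structure in hand, Lemma \ref{l2torofs1cwc} applies. The hypothesis that a regular fiber is sent to an element of infinite order by $\gamma$ is exactly the injectivity of $\gamma\circ ev_x$ required by the lemma. Moreover, since $[F]=a_i[F_i]$ in $H_1(M)$ at each exceptional orbit, the composite $\phi\circ ev_x$ is multiplication by the same real number $k_\phi=\phi([F])$ regardless of $x$. A direct cell count, using that only the $0$-cells over cone points carry non-trivial isotropy, gives
\[\sum_{d\geq 0}(-1)^d\sum_{i\in J_d}\frac{1}{|H_i|}\;=\;\chi(M/S^1)-\sum_{i=1}^{r}\Big(1-\frac{1}{a_i}\Big)\;=\;\chi_{\op{orb}}^{S^1}(M),\]
where $r$ is the number of exceptional fibers, so the two definitions of the $S^1$-orbifold Euler characteristic agree. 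Lemma \ref{l2torofs1cwc} therefore yields
\[\tau^{(2)}(M,\phi,\gamma)\doteq\max\bigl\{1,t^{k_\phi}\bigr\}^{-\chi_{\op{orb}}^{S^1}(M)}.\]

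Finally, I would plug in Lemma \ref{thurstonnormforallSFS}, which gives $x_M(\phi)=|\chi_{\op{orb}}^{S^1}(M)\cdot k_\phi|$. Under the hypotheses of the corollary the base $2$-orbifold of $M$ is Euclidean or hyperbolic, so $\chi_{\op{orb}}^{S^1}(M)\leq 0$ and the exponent $-\chi_{\op{orb}}^{S^1}(M)$ is non-negative. A short piecewise inspection shows that for any non-negative real $c$ and any real $k$ one has $\max\{1,t^k\}^c\doteq\max\{1,t^{|ck|}\}$: the identity holds on the nose when $k\geq 0$, and when $k<0$ the two sides differ by the shift factor $t^{ck}$. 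Applying this with $c=-\chi_{\op{orb}}^{S^1}(M)$ and $k=k_\phi$ turns the formula above into $\tau^{(2)}(M,\phi,\gamma)\doteq\max\{1,t^{x_M(\phi)}\}$, as required. The main obstacle I anticipate is the first step, namely constructing the equivariant $S^1$-CW-structure carefully enough that the isotropy bookkeeping matches Definition \ref{def:orbchar} cell by cell; once that is in place, the rest is a formal consequence of Lemmas \ref{thurstonnormforallSFS} and \ref{l2torofs1cwc}.
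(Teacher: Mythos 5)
Your argument is correct and follows essentially the same route as the paper: the orientability of $M/S^1$ yields an effective $S^1$-action compatible with the fibration, Lemma \ref{l2torofs1cwc} gives $\max\{1,t^{k_\phi}\}^{-\chi_{\op{orb}}^{S^1}(M)}$, and asphericity plus Lemma \ref{thurstonnormforallSFS} together with the identity $\max\{1,t^k\}\doteq\max\{1,t^{|k|}\}$ finish the computation. Your additional care in matching the cell-by-cell isotropy count to Definition \ref{def:orbchar} only makes explicit what the paper asserts in passing.
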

\begin{proof}
	Note that the standard fiber torus $(a;b)$ admits an effective $S^1$-action such that the fibers and the orbits of the action coincide. To choose such an action for a neighborhood of a Seifert fiber is the same as giving the corresponding point in the base space a local orientation. Hence this action extends to $M$ because $M/S^1$ is orientable. Therefore Lemma \ref{l2torofs1cwc} implies 
	\[ \LAT(M,\phi,\gamma)\doteq \max \left\{1, t^{k_\phi} \right\}^{-\chi_{orb}^{S^1} (M)}=\max \left\{1, t^{-k_\phi \chi_{orb}^{S^1} (M)} \right\} .\]
	The last equality holds because $M$ is aspherical and hence $-\chi_{orb}^{S^1} (M)\geq 0$ \cite[Theorem 5.3]{Sc83}.
	By the construction of the $S^1$-action we easily see that $k_\phi$ and $\chi_{orb}^{S^1}(M)$ are the same as in Lemma \ref{thurstonnormforallSFS}. Moreover one has the relation $\max\left\{1,t^k\right\}\doteq \max\left\{1,t^{|k|}\right\}$ for any $k\in\R$ because of the equality $\max\left\{1,t^k\right\}=t^k\cdot \max\left\{1,t^{-k}\right\}$.
\end{proof}
\subsection{The $L^2$-Alexander torsion for Seifert fiber spaces without an effective $S^1$-action}
Let $M$ be a Seifert fiber space. An embedded torus in $M$ is called $\emph{vertical}$, if it is a union of regular fibers.
One should observe that cutting $M$ along a vertical torus $T$ is exactly the same as cutting the base space of $M$ along an embedded curve which does not intersect a cone point. This will be the key observation in the next proofs.

 As indicated in the following proofs we will cut $M$ into pieces, where we can calculate the $L^2$-Alexander function. Therefore we need a gluing formula for the Thurston norm which is due to Eisenbud and Neumann \cite[Proposition 3.5]{EN85}.
\begin{theorem}\label{gluingthurstonnorom}
	Let $\mathcal{T}$ be a collection of incompressible disjoint tori embedded in $N$. Denote by $\mathcal{B}$ the collection of components of $N\setminus \mathcal{T}$ and by $i_B\colon B\rightarrow N$ the inclusion of a component $B\in\mathcal{B}$. Then the Thurston norm of each $\phi\in H^1(N;\R)$ satisfies the equality
	\[ x_N(\phi)=\sum_{B \in \mathcal{B}} x_B(i_B^{\ast}\phi).\]
\end{theorem}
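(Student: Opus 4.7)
The plan is to reduce, by continuity and homogeneity of the Thurston semi-norm, to the case of integral classes $\phi \in H^1(N;\Z)$, and then to prove the two inequalities $\sum_B x_B(i_B^{\ast}\phi) \leq x_N(\phi)$ and $x_N(\phi) \leq \sum_B x_B(i_B^{\ast}\phi)$ separately by a cut-and-paste argument. The essential ingredients are the incompressibility of the tori in $\mathcal{T}$ and the irreducibility of $N$ (which, in the setting of the paper, follows from the standing hypotheses once the trivial cases $S^1 \times S^2$ and $S^1 \times D^2$ are excluded).

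For the direction $\sum_B x_B(i_B^{\ast}\phi) \leq x_N(\phi)$, I would take a \emph{taut} properly embedded oriented surface $S \subset N$ dual to $\phi$ with $\chi_-(S) = x_N(\phi)$, and isotope $S$ to be transverse to $\mathcal{T}$. The standard innermost-disk argument, applied to any curve $c \subset S \cap T$ that is inessential on some $T \in \mathcal{T}$, uses incompressibility of $S$ to find a disk on $S$ bounded by $c$, then irreducibility of $N$ to find a ball across which $S$ may be isotoped to remove $c$. After finitely many such reductions every component of $S \cap T$ is essential on $T$, and hence also on $S$ (since $S$ is incompressible and any essential curve on $T$ is nontrivial in $\pi_1(N)$). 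Cutting then produces pieces $S_B := S \cap B$ with no disk or sphere components, so $\chi_-(S_B) = -\chi(S_B)$; additivity of Euler characteristic under cutting along $1$-manifolds gives $\sum_B \chi_-(S_B) = -\chi(S) = \chi_-(S) = x_N(\phi)$, and since each $S_B$ represents $\mathrm{PD}(i_B^{\ast}\phi)$ in $B$ we obtain $\sum_B x_B(i_B^{\ast}\phi) \leq \sum_B \chi_-(S_B) = x_N(\phi)$.

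For the reverse inequality I would choose, for each $B \in \mathcal{B}$, a taut properly embedded oriented surface $S_B \subset B$ dual to $i_B^{\ast}\phi$ realizing $\chi_-(S_B) = x_B(i_B^{\ast}\phi)$, and then glue the family $\{S_B\}$ along $\mathcal{T}$. The traces $S_B \cap T$ and $S_{B'} \cap T$ on a common torus $T$ represent the same class in $H_1(T;\Z)$ but generally disagree as $1$-manifolds. After isotopy within each piece, each trace can be taken to consist of parallel simple closed curves of a common slope on $T$; the discrepancy between the two sides then lies in pairs of oppositely oriented parallel copies contributing nothing to the homology class. These can be reconciled by attaching oriented annuli on each side (either embedded in a collar $T \times [0,\varepsilon]$ pushed into the piece, or added as new components), each of which has $\chi = 0$ and so leaves $\chi_-(S_B)$ unchanged. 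Gluing then produces $S \subset N$ with $[S] = \mathrm{PD}(\phi)$ and $\chi_-(S) = \sum_B \chi_-(S_B)$, so $x_N(\phi) \leq \sum_B x_B(i_B^{\ast}\phi)$.

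The main obstacle I expect is the boundary-matching in this second direction: ensuring that the bookkeeping of parallel oriented curves on each torus can be reconciled by annulus attachments without inadvertently introducing components of positive Euler characteristic or altering the ambient homology class requires a careful case analysis that is routine in principle but easy to mishandle. The first direction is essentially standard once tautness and the innermost-disk reductions are in place.
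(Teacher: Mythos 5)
First, a remark on context: the paper does not prove this statement at all --- it is quoted from Eisenbud--Neumann \cite{EN85}, Proposition 3.5 --- so your sketch must be measured against the standard argument rather than against anything in the text. Your reduction to integral classes and your proof of the inequality $\sum_B x_B(i_B^{\ast}\phi)\le x_N(\phi)$ are correct and follow that standard argument exactly: tautness gives incompressibility of $S$, the innermost-disk/irreducibility argument removes intersection curves that are inessential on some $T$, incompressibility of the tori rules out disk pieces after cutting, and $\chi$ is additive under cutting along circles.

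The reverse inequality as written has a genuine gap beyond the bookkeeping you flag. Even after the traces on each $T$ are reconciled, the glued surface $S$ only satisfies $[S\cap B]=\mathrm{PD}(i_B^{\ast}\phi)$ for every $B$; this forces $[S]=\mathrm{PD}(\phi)$ only when the restriction $H^1(N;\R)\to\bigoplus_{B}H^1(B;\R)$ is injective, which fails whenever some $T\in\mathcal{T}$ is non-separating (by Mayer--Vietoris the kernel is spanned by the classes $\mathrm{PD}[T]$, each of which restricts to a boundary-parallel, hence trivial, class in every $B$). Concretely, for $N=T^3$ cut along $T=T^2\times\{\mathrm{pt}\}$ and $\phi=\mathrm{PD}[T]$ one has $i_B^{\ast}\phi=0$, so your construction yields $S=\emptyset$, which is not dual to $\phi$; and non-separating vertical tori do occur in the paper's own application (Figure \ref{kleinbottle}). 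The repair is short but must be stated: $\mathrm{PD}(\phi)-[S]=\sum_{T}a_T[T]$, and since tori have $\chi_-=0$ one either adjoins the corresponding parallel copies of the $T$'s to $S$, or invokes subadditivity of the semi-norm together with $x_N(\mathrm{PD}^{-1}[T])=0$, to get $x_N(\phi)\le \chi_-(S)=\sum_B\chi_-(S_B)$. A smaller point on the boundary-matching itself: adding boundary-parallel annuli only inserts \emph{adjacent} oppositely oriented pairs of the existing slope, so it cannot realize every cyclic orientation pattern and cannot reconcile the two sides when $\phi|_T=0$ and they carry cancelling pairs of \emph{different} slopes. It is cleaner to run the move in the other direction: delete adjacent cancelling pairs by gluing in the annulus of $T$ they cobound (pushed into the piece; this preserves $\chi$, $\chi_-$ and the relative homology class) until each trace is the minimal coherently oriented configuration, which is then the same on both sides up to isotopy of $T$.
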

\begin{lemma}\label{lem:LATKlein}
	Let $M$ be a Seifert fiber space with base space a non-orientable surface of genus $2$. Moreover let $(M,\phi,\gamma)$ be admissible, such that the image of a regular fiber under $\gamma$ is an element with infinite order. Then
	\[ \LAT({M,\phi,\gamma})\doteq\max\left\{1, t^{x_M(\phi)}\right\}\]
\end{lemma}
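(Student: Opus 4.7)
The difficulty is that when the base orbifold of $M$ is non-orientable, no global $S^1$-action on $M$ exists, so Corollary \ref{cor:Mainthm} does not apply directly. The plan is to cut $M$ along a single well-chosen vertical torus, reducing to a Seifert fiber space over an \emph{orientable} base, apply Corollary \ref{cor:Mainthm} there, and reassemble using Proposition \ref{pro:LATgluing}. Concretely, realize the Klein bottle as the twisted circle bundle $K=S^1\times[0,1]/(z,0)\sim(\bar z,1)$ and let $c$ be a generic fiber $S^1\times\{\tfrac12\}$, isotoped off the cone points. This $c$ is orientation-preserving with annular tubular neighborhood, and cutting $K$ along $c$ produces a single annulus (carrying the cone points of $K$), since the two half-cylinders $S^1\times[0,\tfrac12]$ and $S^1\times[\tfrac12,1]$ are reglued along the residual identification $(z,0)\sim(\bar z,1)$. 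Lifting to $M$, the preimage $T:=p^{-1}(c)$ is a vertical incompressible torus (incompressibility uses that the base orbifold has non-positive Euler characteristic), and cutting $M$ along $T$ yields a connected Seifert fiber space $M'$ with orientable base and two new boundary tori $T_+,T_-\subset\partial M'$.

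The hypotheses of Corollary \ref{cor:Mainthm} now hold for $M'$: it is an aspherical Seifert fiber space over an orientable orbifold of non-positive Euler characteristic, it has at least two boundary components (hence $M'\neq S^1\times D^2$), and its regular fibers coincide with those of $M$, so their image under $\gamma|_{\pi_1(M')}$ retains infinite order. Consequently,
\[
\LAT(M',\phi|_{M'},\gamma|_{M'})\doteq\max\bigl\{1,t^{x_{M'}(\phi|_{M'})}\bigr\},
\]
and Theorem \ref{gluingthurstonnorom} applied to the single incompressible torus $T$ gives $x_M(\phi)=x_{M'}(\phi|_{M'})$.

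To recover $\LAT(M,\phi,\gamma)$, I would apply Proposition \ref{pro:LATgluing} to the pushout
\[
\begin{tikzcd}
T\sqcup T\arrow{r}\arrow{d}&T\times[0,1]\arrow{d}\\
M'\arrow{r}&M,
\end{tikzcd}
\]
in which $T\sqcup T$ includes into $M'$ as the cut tori and into $T\times[0,1]$ as its boundary. Since $T=S^1\times S^1$ carries an $S^1$-action by rotation along the fiber direction, on which $\gamma$ is injective by hypothesis, Lemma \ref{l2torofs1cwc} gives $\LAT(T)\doteq\max\{1,t^{k_\phi}\}^{-\chi^{S^1}_{\op{orb}}(T)}=1$ since $\chi^{S^1}_{\op{orb}}(T)=\chi(S^1)=0$, and analogously for $T\sqcup T$ and for $T\times[0,1]$ (the latter deformation retracts onto $T$). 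The gluing formula then collapses to $\LAT(M,\phi,\gamma)\doteq\LAT(M',\phi|_{M'},\gamma|_{M'})\doteq\max\{1,t^{x_M(\phi)}\}$. The principal obstacle is the simultaneous verification of all auxiliary hypotheses: non-vanishing of each $L^2$-Alexander torsion entering Proposition \ref{pro:LATgluing}, asphericity and the non-$S^1\times D^2$ character of $M'$, and incompressibility of the vertical torus $T$—all routine, but requiring that the excluded degenerate cases really do not occur under the assumptions on $M$.
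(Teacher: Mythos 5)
Your argument is correct and follows essentially the same route as the paper: cut $M$ along vertical incompressible tori so that the resulting piece(s) have orientable base, apply Corollary \ref{cor:Mainthm} there, and reassemble via the gluing formula for $\LAT$ (using $\LAT(T)\doteq 1$) together with Eisenbud--Neumann additivity of the Thurston norm. The only difference is cosmetic: the paper cuts along \emph{two} fiber circles, splitting the Klein bottle into two annuli and avoiding the collar-pushout for a non-separating torus, whereas you cut along one circle and handle the self-gluing with the $T\times[0,1]$ collar --- both are valid.
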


\begin{proof}
	We can cut $M$ along two vertical tori, such that we obtain two pieces $M_1, M_2$ both with orientable base space (see Figure \ref{kleinbottle}). Then the restriction of $\gamma$ to the tori has infinite image by hypothesis. We obtain from Proposition \ref{pro:LATgluing}, Corollary \ref{cor:Mainthm}, and Theorem \ref{gluingthurstonnorom}:
	\begin{align*}
	\LAT({M,\phi,\gamma})&\doteq \LAT({M_1,\phi i_1,\gamma i_1})\cdot \LAT({M_2,\phi i_2,\gamma i_2}) \\
	&\doteq\max \left\{1, t^{x_{M_1} (i^{\ast}_1\phi)} \right\} \cdot \max \left\{1, t^{x_{M_2}(i^{\ast}_2\phi)}\right\} \\
	&=\max \left\{1, t^{x_{M_1}(i^{\ast}_1\phi)+ x_{M_2}(i^{\ast}_2\phi)}\right\}\\
	&=\max \left\{1, t^{x_{M}(\phi) } \right\}.
	\end{align*}
	Here we used that Lemma \ref{l2torofs1cwc} yields $\LAT(T,\phi,\gamma)\doteq 1$ for a torus $T$ and $\gamma$ having infinite image.
\end{proof}
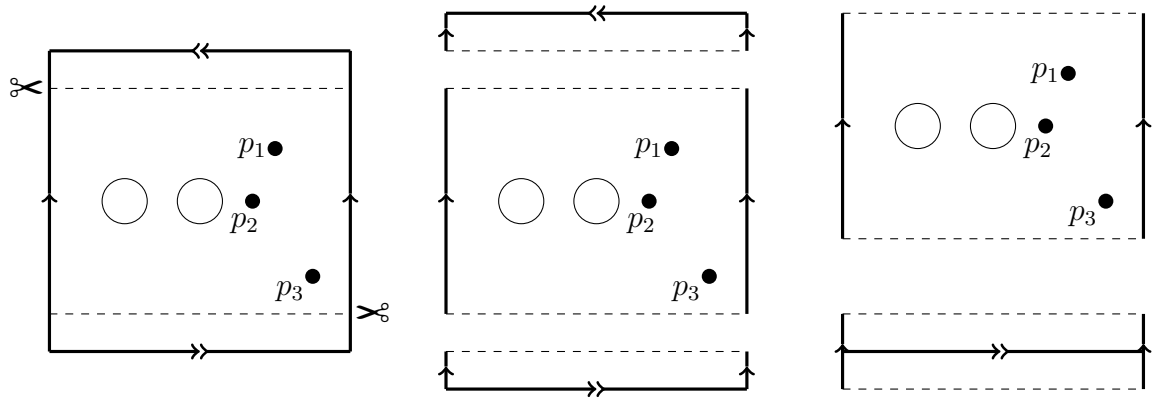
\begin{figure}
	\centering
	\begin{tikzpicture}
	\begin{scope}
	
	\node (siccorsoben) at (-0.3,3.5){\Large\LeftScissors};
	\node (siccorsunten) at (4.3,0.5){\Large\RightScissors};
	
	\draw (1,2) circle (0.3);
	\draw (2,2) circle (0.3);
	
	\fill (3,2.7) circle(0.1);
	\node (p1) at (2.7,2.7) {$p_1$};
	
	\fill (2.7,2) circle(0.1);
	\node (p2) at (2.6,1.7) {$p_2$};
	
	\fill (3.5,1) circle(0.1);
	\node (p3) at (3.2,0.8) {$p_3$};
	
	\draw[dashed] (0,0.5)--(4,0.5);
	\draw[dashed] (0,3.5)--(4,3.5);
	
	\draw[->,very thick](0,0) -- (2,0);
	\draw[>-,very thick](2,0) -- (4,0);
	
	\draw[-<,very thick](0,4) -- (2,4);
	\draw[<-,very thick](2,4) -- (4,4);
	
	\draw[->,very thick](0,0) -- (0,2.1);
	\draw[very thick](0,2.1) -- (0,4);
	
	\draw[->,very thick](4,0) -- (4,2.1);
	\draw[very thick](4,2.1) -- (4,4);
	
	\end{scope}
	
	\begin{scope}[xshift=150]
	\draw (1,2) circle (0.3);
	\draw (2,2) circle (0.3);
	
	\fill (3,2.7) circle(0.1);
	\node (p1) at (2.7,2.7) {$p_1$};
	
	\fill (2.7,2) circle(0.1);
	\node (p2) at (2.6,1.7) {$p_2$};
	
	\fill (3.5,1) circle(0.1);
	\node (p3) at (3.2,0.8) {$p_3$};
	
	%Schneide stellen
	\draw[dashed] (0,0.5)--(4,0.5);
	\draw[dashed] (0,3.5)--(4,3.5);
	\draw[dashed] (0,0)--(4,0);
	\draw[dashed] (0,4)--(4,4);
	
	%Doppelpfeil unten
	\draw[->,very thick](0,-0.5) -- (2,-0.5);
	\draw[>-,very thick](2,-0.5) -- (4,-0.5);
	
	%Doppelpfeil oben
	\draw[-<,very thick](0,4.5) -- (2,4.5);
	\draw[<-,very thick](2,4.5) -- (4,4.5);
	
	%Linke Seite groß
	\draw[->,very thick](0,0.5) -- (0,2.1);
	\draw[very thick](0,2.1) -- (0,3.5);
	%Linke Seite klein unten
	\draw[->,very thick](0,-0.5) -- (0,-0.2);
	\draw[very thick](0,-0.2) -- (0,0);
	%Linke Seite klein oben
	\draw[->,very thick](0,4) -- (0,4.3);
	\draw[very thick](0,4.3) -- (0,4.5);
	
	%rechte Seite groß
	\draw[->,very thick](4,0.5) -- (4,2.1);
	\draw[very thick](4,2.1) -- (4,3.5);
	%rechte Seite klein unten
	\draw[->,very thick](4,-0.5) -- (4,-0.2);
	\draw[very thick](4,-0.2) -- (4,0);
	%rechte Seite klein oben
	\draw[->,very thick](4,4) -- (4,4.3);
	\draw[very thick](4,4.3) -- (4,4.5);
	
	\end{scope}
	\begin{scope}[xshift=300]
	\draw (1,3) circle (0.3);
	\draw (2,3) circle (0.3);
	
	\fill (3,3.7) circle(0.1);
	\node (p1) at (2.7,3.7) {$p_1$};
	
	\fill (2.7,3) circle(0.1);
	\node (p2) at (2.6,2.7) {$p_2$};
	
	\fill (3.5,2) circle(0.1);
	\node (p3) at (3.2,1.8) {$p_3$};
	
	%Doppelpfeil
	\draw[->,very thick](0,0) -- (2.1,0);
	\draw[>-,very thick](2.1,0) -- (4,0);

	%Schneide stellen
	\draw[dashed] (0,1.5)--(4,1.5);
	\draw[dashed] (0,4.5)--(4,4.5);
	\draw[dashed] (0,-0.5)--(4,-0.5);
	\draw[dashed] (0,0.5)--(4,0.5);
	
	%rechte Seite groß
	\draw[->,very thick](4,1.5) -- (4,3.1);
	\draw[very thick](4,3.1) -- (4,4.5);
	%rechte Seite klein unten
	\draw[->,very thick](4,-0.5) -- (4,0.1);
	\draw[very thick](4,0.1) -- (4,0.5);
	
	%Linke Seite groß
	\draw[->,very thick](0,1.5) -- (0,3.1);
	\draw[very thick](0,3.1) -- (0,4.5);
	%linke Seite klein unten
	\draw[->,very thick](0,-0.5) -- (0,0.1);
	\draw[very thick](0,0.1) -- (0,0.5);
	\end{scope}
	\end{tikzpicture}
	\caption{A Klein bottle with two boundary components is cut along two circles to obtain two orientable pieces. The points indicate exceptional fibers. So we do not cut through them.}\label{kleinbottle}
\end{figure}
\begin{theorem}
	Let $M$ be a Seifert fiber space with a non orientable base space other than $\R P^2$ and $(M,\phi,\gamma)$ admissible, such that the image of a regular fiber under $\gamma$ is an element with infinite order. Then
	\[ \LAT({M,\phi,\gamma})\doteq \max\left\{1, t^{x_M(\phi)}\right\}.\]
\end{theorem}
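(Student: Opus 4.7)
My plan mirrors the proof of Lemma \ref{lem:LATKlein}: cut $M$ along vertical tori corresponding to $2$-sided simple closed curves in the base, apply the gluing formula (Proposition \ref{pro:LATgluing}) together with Thurston norm additivity (Theorem \ref{gluingthurstonnorom}), and invoke Lemma \ref{lem:LATKlein} on Klein-bottle-base pieces and Corollary \ref{cor:Mainthm} on orientable-base pieces. Writing $N_g$ for the closed non-orientable surface of genus $g$, I will induct on $g$. The two base cases are $g=2$ (the Klein bottle, handled directly by Lemma \ref{lem:LATKlein}) and $g=1$ with non-empty boundary (the M\"obius band, treated below by a doubling trick).

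\textbf{Inductive step ($g\geq 3$).} Using the connected-sum decomposition $N_g\cong N_{g-2}\#K$, I pick a $2$-sided separating simple closed curve $c$ in the base, disjoint from the exceptional fibers, whose two sides are a Klein bottle with one boundary circle and an $N_{g-2}$ with one boundary circle. Since $c$ is $2$-sided and $M$ is orientable, its preimage $T\subset M$ is a vertical torus (not a Klein bottle), splitting $M$ as $M_1\cup_T M_2$, where $M_1$ is Seifert fibered over the Klein-bottle piece and $M_2$ over the $N_{g-2}$ piece. The regular fiber lies on $T$ and has infinite image under $\gamma$, and $\chi_{\op{orb}}^{S^1}(T)=0$, so Lemma \ref{l2torofs1cwc} yields $\LAT(T,\phi|_T,\gamma|_T)\doteq 1$. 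The gluing formula then gives $\LAT(M,\phi,\gamma)\doteq \LAT(M_1)\cdot \LAT(M_2)$; applying Lemma \ref{lem:LATKlein} to $M_1$ and the induction hypothesis to $M_2$, then combining via the identity $\max\{1,t^a\}\cdot\max\{1,t^b\}=\max\{1,t^{a+b}\}$ valid for $a,b\geq 0$, together with Theorem \ref{gluingthurstonnorom}, produces the desired formula.

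\textbf{M\"obius band base: the main obstacle.} The main obstacle is the terminal case of odd-genus recursion, where the base becomes a M\"obius band (possibly with extra boundary circles): no $2$-sided simple closed curve in a M\"obius band is non-boundary-parallel, so vertical torus cuts cannot further reduce the non-orientability. I resolve this by a doubling argument. Choose any boundary torus $T_0\subset\partial M$ (which exists since the M\"obius band base has boundary) and set $M':=M\cup_{T_0} M$ by gluing two copies of $M$ along $T_0$. The base of $M'$ is the double of the M\"obius band along one of its boundary circles, namely a Klein bottle with boundary coming from the remaining un-glued circles; hence Lemma \ref{lem:LATKlein} applies to $M'$ and gives $\LAT(M')\doteq \max\{1,t^{x_{M'}(\phi')}\}$. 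By Theorem \ref{gluingthurstonnorom} we have $x_{M'}(\phi')=2x_M(\phi)$, and since $\LAT(T_0)\doteq 1$ the gluing formula yields $\LAT(M,\phi,\gamma)^2\doteq \max\{1,t^{x_M(\phi)}\}^2$. Taking the positive square root, legitimate since both sides are strictly positive functions by Liu's continuity theorem, completes the proof.
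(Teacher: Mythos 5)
Your proposal follows essentially the same route as the paper: cut $M$ along vertical tori so that every piece has Klein-bottle or M\"obius-band base, combine the pieces via the gluing formula, the additivity of the Thurston norm, and $\LAT(T,\phi,\gamma)\doteq 1$ for vertical tori, and handle the M\"obius-band pieces by doubling them to a Klein-bottle-base piece covered by Lemma \ref{lem:LATKlein}. The differences are only organizational — you phrase the reduction as an induction on the genus of the base and spell out the doubling step that the paper dismisses as ``standard'' — so your argument is correct to the same standard of rigor as the paper's.
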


\begin{proof}
	By the classification of non-orientable surfaces, we can cut the base space along embedded curves, such that every piece is a Klein bottle with boundary or a M\"obius strip. This corresponds to cutting $M$ along vertical tori, such that every connected component has a M\"obius strip or Klein bottle as base space. As in the proof above we can use the additivity of the Thurston norm and the gluing formula along tori to prove the statement for every piece. The case of the Klein bottle has been dealt with in the Lemma \ref{lem:LATKlein}. Since the doubling of a M\"obius strip is the Klein bottle, we can use a standard doubling argument for Thurston norm and $L^2$-Alexander torsion to receive the desired result.   
\end{proof}

\end{document}